\newtheorem{theorem}{Theorem}[section]
\newtheorem{proposition}[theorem]{Proposition}
\newtheorem{lemma}[theorem]{Lemma}
\newtheorem{corollary}[theorem]{Corollary}
\theoremstyle{definition}
\newcounter{bean}
\newcommand{\qqed}{\hfill\Box}
\begin{document}


\title[Homotopy of $6$-manifolds]
   {Loop homotopy of $6$-manifolds over $4$-manifolds} 

\author{Ruizhi Huang} 
\address{Institute of Mathematics, Academy of Mathematics and Systems Science, 
   Chinese Academy of Sciences, Beijing 100190, China} 
\email{haungrz@amss.ac.cn} 
   \urladdr{https://sites.google.com/site/hrzsea/}

\subjclass[2020]{Primary 
55P15, 
55P35, 
57R19; 
Secondary 
55P62, 
55P10, 
55P40. 
}
\keywords{$6$-manifolds, homotopy decomposition, loop spaces, coformal spaces, homotopy groups}


\begin{abstract} 
Let $M$ be the $6$-manifold $M$ arising as the total space of the sphere bundle of a rank $3$ vector bundle over a simply connected closed $4$-manifold. We show that after looping $M$ is homotopy equivalent to a product of loops on spheres in general. This particularly implies a cohomology rigidity property of $M$ after looping. Furthermore, passing to the rational homotopy, we show that such $M$ is Koszul in the sense of Berglund.
\end{abstract}

\maketitle


\section{Introduction}
Classification of manifolds is a fundamental problem in geometry and topology.
There are tremendous investigations around this problem in both the smooth and topological category.
For instance, in the general case, Wall \cite{Wal1, Wal3} studied $(n-1)$-connected $2n$-manifolds and $(n-1)$-connected $(2n+1)$-manifolds. For concrete cases with specified dimension, Bardon \cite{Bar} classified simply connected $5$-manifolds, and Wall \cite{Wal2}, Jupp \cite{Jup} and Zhubr \cite{Zhu1, Zhu2} classified simply connected $6$-manifolds. More recently, Kreck and Su \cite{KS} classified certain non-simply connected $5$-manifolds, while Crowley and Nordstr\"{o}m \cite{CN} and Kreck \cite{Kre} studied the classification of various kinds of $7$-manifolds.

In the literature mentioned, the homotopy classification of $M$ was usually carried out as a byproduct in terms of a system of invariants. However, it is almost impossible to extract nontrivial homotopy information of $M$ directly from the classification. On the other hand, unstable homotopy theory is a powerful tool for studying the homotopy properties of manifolds preserved by suspending or looping. From the suspension aspect, So and Theriault \cite{ST} determined the homotopy type of the suspension of connected $4$-manifolds, while Huang \cite{H} studied the suspension of simply connected $6$-manifolds. From the loop aspect, Beben and Theriualt \cite{BT1} studied the loop decompositions of $(n-1)$-connected $2n$-manifolds, while Beben and Wu \cite{BW} and Huang and Theriault \cite{HT} studied the loop decompositions of the $(n-1)$-connected $(2n+1)$-manifolds. The homotopy groups of these manifolds were also investigated by Sa. Basu and So. Basu \cite{BB, Bas} from different point of view. Moreover, a theoretical method of loop decomposition was developed by Beben and Theriault \cite{BT2}, which is quite useful for studying the homotopy of manifolds. 

In this paper, we study the loop homotopy of certain simply connected $6$-manifolds arising from $4$-manifolds.
Let $N$ be a simply connected closed $4$-manifold such that $H^2(N;\mathbb{Z})\cong \mathbb{Z}^{\oplus d}$ with $d\geq 1$. A rank $3$ vector bundle $\xi$ over $N$ is classified by a map $f: N\longrightarrow BSO(3)$, where $BSO(3)$ is the classifying space of the special orthogonal group $SO(3)$. The sphere bundle of $\xi$
\begin{equation}\label{Mdefeq}
S^2\stackrel{i}{\longrightarrow} M\stackrel{p}{\longrightarrow} N
\end{equation}
defines the closed $6$-manifold $M$. Since the integral cohomologies of $N$ and $S^2$ are free and concentrated in even degree, the Serre spectral sequence of (\ref{Mdefeq}) collapses and $
H^\ast(M;\mathbb{Z})\cong H^\ast(N;\mathbb{Z})\otimes H^\ast(S^2;\mathbb{Z})$.
Our main result is the following theorem which will be proved in Section \ref{sec: pf}.

\begin{theorem}\label{decomthm}
Let $N$ be a simply connected closed $4$-manifold such that $H^2(N;\mathbb{Z})\cong \mathbb{Z}^{\oplus d}$ with $d\geq 1$. Let $M$ be the total manifold of the sphere bundle of any rank $3$ vector bundle over $N$. Then
\begin{itemize}
\item if $d=1$, 
   \[ \Omega M\simeq S^1\times \Omega S^2\times \Omega S^5,\] 
\item if $d\geq 2$,
   \[
   \Omega M\simeq S^1\times \Omega S^2\times\Omega (S^2\times S^3)\times  \Omega\big(J\vee(J\wedge\Omega (S^2\times S^3))\big),
   \]
where $J=\mathop{\bigvee}\limits_{i=1}^{d-2}(S^2\vee S^3)$.
   \end{itemize}
\end{theorem}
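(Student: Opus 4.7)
The plan is to reduce the problem to the loop space of the base $N$ by exploiting a section of the sphere bundle, and then invoke the known loop decomposition of simply connected closed $4$-manifolds. First I would show that $p \colon M \to N$ admits a section. Since $N$ is a simply connected closed $4$-manifold, Poincar\'{e} duality gives $H^3(N;\mathbb{Z})\cong H_1(N;\mathbb{Z})=0$. The Euler class of the oriented rank $3$ bundle $\xi$ lives in this group and therefore vanishes; equivalently, $\xi\cong\xi'\oplus\epsilon^1$ for some oriented rank $2$ bundle $\xi'$, and the trivial line summand provides a section $s\colon N\to M$ of $p$.

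Next I would invoke the standard principle that a fibration with section splits after looping: for any fibration $F\stackrel{i}{\to} E\stackrel{p}{\to} B$ with section $s$, the map $\mu\colon \Omega B\times \Omega F\to \Omega E$ given by $(\gamma,\delta)\mapsto \Omega s(\gamma)\cdot \Omega i(\delta)$ is a homotopy equivalence, since the section splits the homotopy long exact sequence and forces $\mu$ to be a $\pi_\ast$-isomorphism, whereupon Whitehead's theorem delivers the conclusion. Applied here this yields
\[
\Omega M\simeq \Omega N\times \Omega S^2.
\]

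It remains to decompose $\Omega N$. For $d=1$, $N\simeq\mathbb{C}P^2$; looping the fibration $S^5\to\mathbb{C}P^2\to K(\mathbb{Z},2)$ produces a fibration $\Omega S^5\to\Omega\mathbb{C}P^2\to S^1$, split by the generator of $\pi_1(\Omega\mathbb{C}P^2)=\pi_2(\mathbb{C}P^2)=\mathbb{Z}$, so $\Omega\mathbb{C}P^2\simeq S^1\times \Omega S^5$. For $d\geq 2$, I would appeal to the loop space decomposition of Beben--Theriault \cite{BT1} for $(n-1)$-connected $2n$-manifolds specialized to $n=2$, and after a small rewriting identify
\[
\Omega N\simeq S^1\times\Omega(S^2\times S^3)\times \Omega\bigl(J\vee(J\wedge\Omega(S^2\times S^3))\bigr), \quad J=\bigvee_{i=1}^{d-2}(S^2\vee S^3).
\]
Multiplying by the $\Omega S^2$ fiber factor in each case yields the claimed decomposition of $\Omega M$.

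The main obstacle is matching the $d\geq 2$ decomposition of $\Omega N$ to precisely the stated form. A priori the homotopy type of $N$, and hence $\Omega N$, depends on the intersection form on $H_2(N;\mathbb{Z})$, whereas the theorem asserts the decomposition depends only on $d$. The core technical point is that the attaching map of the top cell of $N$, expressed as a combination of Whitehead products with coefficients drawn from the intersection form, produces a uniform loop homotopy type regardless of those coefficients; the Beben--Theriault machinery for $1$-connected $4$-manifolds is precisely what provides this robustness.
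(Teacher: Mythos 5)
Your argument breaks down at the very first step, and the failure is not repairable within your framework: the sphere bundle $S^2\to M\to N$ does \emph{not} admit a section in general. The Euler class $e(\xi)\in H^3(N;\mathbb{Z})=0$ is only the \emph{primary} obstruction to a nowhere-zero section of the rank $3$ bundle $\xi$; since the fibre of the sphere bundle is $S^2$ and $\dim N=4$, there is a secondary obstruction living in $H^4(N;\pi_3(S^2))\cong\mathbb{Z}$, which is essentially governed by $p_1(\xi)$ and is frequently nonzero. Equivalently, $\xi\cong\xi'\oplus\epsilon^1$ would force $p_1(\xi)=e(\xi')^2$ with $e(\xi')\equiv \omega_2(\xi)\pmod 2$, and this is a genuine constraint. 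Concretely: pull back the rank $3$ bundle over $S^4$ whose sphere bundle is the twistor fibration $S^2\to\mathbb{C}P^3\to S^4$ (a generator of $\pi_4(BSO(3))\cong\mathbb{Z}$, with $p_1=4$) along the pinch map $q\colon S^2\times S^2\to S^4$. The resulting bundle over $N=S^2\times S^2$ has $\omega_2=0$ and $p_1=4\cdot[\text{gen}]$; a splitting would require a class $a\in H^2(N;\mathbb{Z})$ with $a\equiv 0\pmod 2$ and $a^2=4\cdot[\text{gen}]$, i.e.\ a class $b=a/2$ with $b^2=[\text{gen}]$, which is impossible for the hyperbolic intersection form. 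So already for $d=2$ there are bundles in the theorem's scope with no section, and your reduction $\Omega M\simeq\Omega N\times\Omega S^2$ via the section is unavailable. This is precisely the difficulty the paper is organized around: the splitting $\Omega M\simeq \Omega S^2\times\Omega N$ (or its analogue over an auxiliary circle bundle) has to be \emph{proved} rather than read off from a section, either by pulling $\xi$ back to a $5$-manifold $Y$ over which it becomes trivial (the case $\langle\beta^2,[N]\rangle$ odd), or by comparing with a model Poincar\'{e} duality space $Q$ with $\Omega Q\simeq\Omega S^2\times\Omega S^2$ and constructing a retraction of $\Omega i$ by hand (the case $\langle\beta^2,[N]\rangle$ even).

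The remainder of your outline is sound as far as it goes: a fibration with a section does split after looping, $\Omega\mathbb{C}P^2\simeq S^1\times\Omega S^5$, and the Beben--Theriault decomposition of $\Omega N$ for $d\geq 2$ is exactly the input the paper uses in its even case, including the point that the answer depends only on $d$ and not on the intersection form. But all of that is downstream of the nonexistent section, so the proof as written establishes the theorem only for the special class of bundles that happen to split off a trivial line summand.
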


From Theorem \ref{decomthm} and its proof, it can be easily seem that the decomposition in Theorem \ref{decomthm} is compatible with the $S^2$-bundle (\ref{Mdefeq}) after looping. In particular, this means that though the fibre bundle (\ref{Mdefeq}) does not split in general, its loop does.
Moreover, as discussed in \cite[Page 217]{BT1}, the term $J\vee(J\wedge\Omega (S^2\times S^3))$ in the decomposition of Theorem \ref{decomthm} is a bouquet of spheres. Hence by the Hilton-Milnor theorem, we see that $\Omega M$ is homotopy equivalent to a product of loops on spheres with $S^1$. Additionally since the decompositions of Theorem \ref{decomthm} only depend on the value of $d$ which is determined by and determines $H^2(M;\mathbb{Z})$, we have the rigidity property of $M$ after looping.
\begin{corollary}
Let $M$ and $M^\prime$ be two $6$-manifolds in Theorem \ref{decomthm}. Then $\Omega M\simeq \Omega M^\prime$ if and only if $H^2(M;\mathbb{Z})\cong H^2(M^\prime;\mathbb{Z})$. ~$\qqed$
\end{corollary}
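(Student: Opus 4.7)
The plan is to exhibit the integer $d$ from the hypothesis of Theorem \ref{decomthm} as a complete invariant of both $H^2(M;\mathbb{Z})$ and the loop homotopy type $\Omega M$, and then to read off the equivalence directly.

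First I would compute $H^2(M;\mathbb{Z})$ explicitly. By the collapse of the Serre spectral sequence of (\ref{Mdefeq}) already recorded in the introduction, $H^\ast(M;\mathbb{Z}) \cong H^\ast(N;\mathbb{Z}) \otimes H^\ast(S^2;\mathbb{Z})$, so in degree $2$ one obtains
\[
H^2(M;\mathbb{Z}) \cong H^2(N;\mathbb{Z}) \oplus H^2(S^2;\mathbb{Z}) \cong \mathbb{Z}^{d+1}.
\]
In particular the isomorphism class of the free abelian group $H^2(M;\mathbb{Z})$ is determined by, and determines, the integer $d$.

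For the ``if'' direction, if $H^2(M;\mathbb{Z}) \cong H^2(M';\mathbb{Z})$ then the corresponding integers satisfy $d = d'$, so Theorem \ref{decomthm} produces literally the same expression for $\Omega M$ and $\Omega M'$, giving $\Omega M \simeq \Omega M'$. For the ``only if'' direction, suppose $\Omega M \simeq \Omega M'$. Then $\pi_1(\Omega M) \cong \pi_1(\Omega M')$. Since $M$ and $M'$ are simply connected closed manifolds, the Hurewicz theorem yields
\[
\pi_1(\Omega M) \cong \pi_2(M) \cong H_2(M;\mathbb{Z}) \cong \mathbb{Z}^{d+1},
\]
and likewise for $M'$. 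Matching ranks forces $d = d'$, and the first step returns $H^2(M;\mathbb{Z}) \cong H^2(M';\mathbb{Z})$.

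There is really no serious obstacle: once Theorem \ref{decomthm} and the spectral sequence collapse are in hand, the corollary is pure bookkeeping, and the only content is to detect $d$ from $\Omega M$, which is cleanly achieved by $\pi_1(\Omega M) = \pi_2(M) = \mathbb{Z}^{d+1}$. As a sanity check, one can verify directly from the two cases of Theorem \ref{decomthm} that $H_1(\Omega M;\mathbb{Z})$ has rank $d+1$: for $d=1$ one gets rank $2$ from the $S^1$ and $\Omega S^2$ factors, while for $d\geq 2$ the $S^1$, $\Omega S^2$, and $\Omega(S^2\times S^3)$ factors contribute rank $3$ and the remaining factor $\Omega(J\vee(J\wedge\Omega(S^2\times S^3)))$ contributes $H_2(J) = \mathbb{Z}^{d-2}$ via Hurewicz applied to the simply connected wedge.
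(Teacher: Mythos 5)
Your argument is correct and is essentially the paper's own: the paper states the corollary without proof, justifying it in the preceding paragraph by the observation that the decompositions of Theorem \ref{decomthm} depend only on $d$, which is determined by and determines $H^2(M;\mathbb{Z})$. Your version merely fills in the routine details (the computation $H^2(M;\mathbb{Z})\cong\mathbb{Z}^{d+1}$ from the collapsed spectral sequence, and the recovery of $d$ from $\Omega M$ via $\pi_1(\Omega M)\cong\pi_2(M)\cong H_2(M;\mathbb{Z})$), all of which check out.
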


Theorem \ref{decomthm} can be improved if we pass from integral homotopy to rational homotopy. Indeed, by Theorem \ref{decomthm} it is straightforward to compute the homotopy groups of $M$ in terms of those of spheres. However, there is an additional Lie algebra structure on the homotopy groups of any $CW$ complex $X$. In rational homotopy theory, the graded Lie algebra $\pi_\ast(\Omega X)\otimes \mathbb{Q}$ is called the {\it homotopy Lie algebra} of $X$, and $X$ is called {\it coformal} if the rational homotopy type of $X$ is completely determined by its homotopy Lie algebra. If $X$ is further {\it formal}, that is the homotopy type of $X$ is determined by the graded commutative algebra $H^\ast(X;\mathbb{Q})$, then $X$ is {\it Koszul} in the sense of Berglund \cite[Definition 1.1]{Ber}. In the latter case, $H^\ast(X;\mathbb{Q})$ is a {\it Koszul algebra} and $\pi_\ast(\Omega X)\otimes\mathbb{Q}$ is a {\it Koszul Lie algebra} \cite{Ber}. The following theorem concerns these additional structures on $M$ in Theorem \ref{decomthm}.
\begin{theorem}\label{coformalthm}
Let $N$ be a simply connected closed $4$-manifold such that $H^2(N;\mathbb{Z})\cong \mathbb{Z}^{\oplus d}$. Let $M$ be the total manifold of the sphere bundle of any rank $3$ vector bundle over $N$. Then 
\begin{itemize}
\item if $d=1$, $M$ is not coformal, 
\item if $d\geq 2$, $M$ is Koszul, and there is an isomorphism of graded Lie algebras
\[
\pi_\ast(\Omega M)\otimes\mathbb{Q}\cong H^\ast(M;\mathbb{Q})^{!\mathscr{L}ie},
\]
where $(-)^{!\mathscr{L}ie}$ is the Koszul dual Lie functor defined in \cite[Section 2]{Ber}.
\end{itemize}
\end{theorem}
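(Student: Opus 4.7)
The plan is to handle the two cases separately: for $d = 1$, use an essential cubic cohomology relation to obstruct coformality via the minimal Sullivan model; for $d \geq 2$, combine formality of $M$ with Koszulness of its cohomology algebra and apply Berglund's framework.

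\textbf{Case $d = 1$.} A simply connected closed $4$-manifold with $b_2 = 1$ is rationally $\cp^2$, so $H^\ast(N;\mathbb{Q}) \cong \mathbb{Q}[x]/(x^3)$. Leray-Hirsch for the sphere bundle yields a fiber class $y \in H^2(M;\mathbb{Q})$ satisfying $y^2 = \alpha y + \beta$ with $\alpha \in H^2(N;\mathbb{Q})$, $\beta \in H^4(N;\mathbb{Q})$; substituting $y \mapsto y - \alpha/2$ over $\mathbb{Q}$ gives $H^\ast(M;\mathbb{Q}) \cong \mathbb{Q}[x, y]/(x^3, y^2 - c x^2)$ for some $c \in \mathbb{Q}$. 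Building the minimal Sullivan model degree by degree, generators in $V^3$ and $V^4$ with quadratic differentials handle the quadratic relation $y^2 - c x^2$ and ensure $H^3 = H^5 = 0$. However, the cubic class $[x^3] \in H^6$ of the partial model cannot be killed by any quadratic boundary (a direct dimension count in $\Lambda^2 V^{\leq 4}$ at degree $6$ rules this out), so a generator $v \in V^5$ must be introduced with $dv = x^3 + (\text{other terms})$. The cubic component in $dv$ precludes a purely quadratic Sullivan differential, so $M$ is not coformal.

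\textbf{Case $d \geq 2$.} The argument has two main parts: establish formality of $M$, and show $H^\ast(M;\mathbb{Q})$ is a Koszul algebra. For formality, $N$ is formal by the classical result for simply connected closed $4$-manifolds; the Euler class of $\xi$ lies in $H^3(N;\mathbb{Z}) = 0$, and a formal cdga model of $M$ is obtained from the formal model of $N$ by adjoining a degree-$2$ generator $y$ with the quadratic relation $y^2 = \alpha$ for suitable $\alpha \in H^4(N;\mathbb{Q})$. For Koszulness of $H^\ast(M;\mathbb{Q})$, choose a basis $x_1, \ldots, x_d$ of $H^2(N;\mathbb{Q})$ with $x_1 x_2 \neq 0$ in $H^4(N;\mathbb{Q})$, which is possible since $d \geq 2$ and the intersection form is non-degenerate; set $\omega := x_1 x_2$. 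Then
\[
H^\ast(M;\mathbb{Q}) \cong \mathbb{Q}[x_1, \ldots, x_d, y] / \bigl(x_i x_j - \lambda_{ij}\,\omega,\ y^2 - \mu\,\omega\bigr)
\]
is a purely quadratic presentation, and Koszulness can be verified by exhibiting a PBW/Gr\"{o}bner basis, or by recognizing the algebra as a quadratic complete intersection in a Koszul family.

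With both ingredients in hand, Berglund's theorem implies $M$ is Koszul in his sense, and the isomorphism $\pi_\ast(\Omega M) \otimes \mathbb{Q} \cong H^\ast(M;\mathbb{Q})^{!\mathscr{L}ie}$ is an immediate consequence of Koszul duality. The main obstacle is verifying Koszulness of the quadratic algebra in complete generality: the parameters $\lambda_{ij}, \mu$ depend on the intersection form of $N$ and on the Pontryagin data of $\xi$, so degenerate choices may call for case analysis or a deformation reducing to a generic parameter set.
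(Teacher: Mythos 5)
Your Case $d=1$ is essentially the paper's argument: both exhibit a minimal Sullivan model of $M$ and observe that the degree-$5$ generator must carry a cubic differential which cannot be made quadratic (indeed $\Lambda^2V$ vanishes in degree $6$ for this model). One small point: to identify the minimal model of $M$ with the bigraded model of the ring $H^\ast(M;\mathbb{Q})$ you should invoke formality of simply connected $6$-manifolds \cite[Proposition 4.6]{NM}, or else work with the relative model of the fibration $S^2\rightarrow M\rightarrow \mathbb{C}P^2$ as the paper does; as written you pass from the cohomology ring to ``the'' minimal model of $M$ without justification.

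The genuine gap is in Case $d\geq 2$. The paper's route is formality by \cite[Proposition 4.6]{NM} together with a direct proof of coformality (Proposition \ref{Mcoformalprop}): the relative Sullivan model of the $S^2$-bundle over the coformal base $N$ is shown to be minimal because the splitting of homotopy groups supplied by Theorem \ref{decomthm} kills the linear part of the differential, and then degree reasons force the differential on the new generators $a,b$ to be purely quadratic. You propose instead to verify that $H^\ast(M;\mathbb{Q})$ is a Koszul algebra and appeal to Berglund's characterization. That route would suffice in principle, but you never carry out its central step: the quadratic presentation you write down is itself unjustified (you must check that the degree-$4$ relations generate the entire relation ideal, including the relations forcing $H^{\geq 7}=0$), and the PBW/Gr\"{o}bner verification of Koszulness is deferred, with an explicit concession that degenerate intersection forms ``may call for case analysis or a deformation.'' Since Koszulness of the cohomology algebra is, in this approach, the entire content of the theorem for $d\geq 2$, the proof is incomplete as it stands. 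The paper's coformality argument, which reuses the loop space decomposition already established in Theorem \ref{decomthm}, is precisely the device that sidesteps this algebraic computation.
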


We turn to the remaining case when $d=0$, that is $N\cong S^4$. Note we still have the $6$-manifold $M$ as constructed in (\ref{Mdefeq}). Though the homotopy classification of such manifolds was almost determined by Yamaguchi \cite{Yam}, this case is surprisingly much harder than the general one. We will explain this point after the statement of our result in this case proved in Proposition \ref{Md=0koddprop} and Proposition \ref{Md=0kevenprop}. Let $\eta_2: S^3\rightarrow S^2$ be the Hopf map.
For any integer $n$, let $S^{m}\{n\}$ be the homotopy fibre of the degree $n$ map on $S^{m}$.
\begin{theorem}\label{d=0thm}
Let $M$ be the total space of the sphere bundle of any rank $3$ vector bundle over $S^4$. Then $M$ has a cell structure of the form
\[
M\simeq S^2\cup_{k\eta_2}e^4\cup e^6,
\]
where $k\in \mathbb{Z}$. Let $k=p_1^{r_1}\cdots p_\ell^{r_\ell}$ be the prime decomposition of $k$. 
We further have
\begin{itemize}
\item if $k$ is odd, 
\[
\Omega M\simeq S^1\times \prod_{j=1}^{\ell} S^3\{p_j^{r_j}\}\times \Omega S^7,
\]
\item if $k=2^r$ with $r\geq 3$,
\[
\Omega M \simeq S^1\times S^3\{2^r\}\times \Omega S^7.
\]
\end{itemize}
\end{theorem}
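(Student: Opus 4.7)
The plan is to reduce $\Omega M$ via a principal circle bundle to a $2$-connected closed $7$-manifold, and then apply the loop decomposition theory for $(n-1)$-connected $(2n+1)$-manifolds.

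First, I would establish the cell structure. Since $M$ is a simply connected closed $6$-manifold whose integral cohomology is $\mathbb{Z}$ in each of degrees $0$, $2$, $4$, $6$ and zero otherwise, a minimal $CW$-structure on $M$ has exactly one cell in each of dimensions $0$, $2$, $4$, $6$. The attaching map of the $4$-cell lies in $\pi_3(S^2)\cong\mathbb{Z}\langle\eta_2\rangle$ and so equals $k\eta_2$ for some integer $k$, which is determined by the cup product $x^2=ky$ on $H^\ast(M;\mathbb{Z})$, where $x\in H^2$ and $y\in H^4$ are generators. This yields $M\simeq S^2\cup_{k\eta_2}e^4\cup e^6$.

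Next, I would classify the generator of $H^2(M;\mathbb{Z})$ by a map $M\to\cp^\infty$ and pull back the universal principal $S^1$-bundle to obtain $S^1\to\widetilde M\to M$. Since the adjoint of the bottom cell $S^2\hookrightarrow M$ provides a section of the looped classifying map $\Omega M\to S^1$, there is an $H$-space equivalence $\Omega M\simeq S^1\times\Omega\widetilde M$. A Serre spectral sequence computation, using $x^2=ky$, shows that $\widetilde M$ is a $2$-connected closed $7$-manifold whose integral cohomology equals $\mathbb{Z}$ in degree $0$, $\mathbb{Z}/k$ in degree $4$, $\mathbb{Z}$ in degree $7$, and vanishes otherwise. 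Consequently $\widetilde M$ admits a CW-structure $\widetilde M\simeq P^4(k)\cup e^7$, where $P^4(k)=S^3\cup_k e^4$ is the mod-$k$ Moore space.

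At this point $\widetilde M$ is a $2$-connected closed $7$-manifold, that is, an $(n-1)$-connected $(2n+1)$-manifold with $n=3$, so it falls into the regime studied in \cite{BW, HT}. I would show that the top-cell attaching map $\phi\colon S^6\to P^4(k)$ has the property that, after looping, the Moore-space contribution to $\Omega\widetilde M$ collapses to precisely $S^3\{k\}$ and decouples from the $\Omega S^7$ factor coming from the top cell. Concretely, one builds a homotopy fibration diagram involving $S^3\{k\}$, $\Omega S^7$, and $\Omega\widetilde M$, constructs a candidate $H$-map $S^3\{k\}\times\Omega S^7\to\Omega\widetilde M$, and verifies it is a weak equivalence by a Hurewicz/homology comparison. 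For the odd prime factorization $k=\prod_{j=1}^\ell p_j^{r_j}$, the further splitting $S^3\{k\}\simeq\prod_j S^3\{p_j^{r_j}\}$ follows from the arithmetic fracture square together with the fact that $S^3\{p^r\}$ is contractible away from $p$.

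The case $k=2^r$ with $r\geq 3$ proceeds along the same lines but requires extra care at the prime $2$. The restriction $r\geq 3$ is precisely the range in which the $2$-primary loop-space decomposition of the Moore space $P^4(2^r)$ is available; for $r\in\{1,2\}$ classical obstructions involving the divisibility of $\eta$ and the Whitehead product $[\iota_3,\iota_3]$ prevent the same splitting. The main obstacle throughout the proof is controlling the top-cell attaching map $\phi$ and verifying it is compatible with the desired product decomposition. This would be handled by locating $\phi$ inside $\pi_6(P^4(k))$ and constructing the required $H$-space retractions via the theoretical method of Beben-Theriault \cite{BT2}, which is the delicate technical step and the reason this case is treated separately from Theorem \ref{decomthm}.
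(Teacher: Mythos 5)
Your reduction is the same as the paper's: represent a generator of $H^2(M;\mathbb{Z})$ by a map to $\mathbb{C}P^\infty$, pull back the universal circle bundle to get $S^1\to X\to M$, split $\Omega M\simeq S^1\times\Omega X$, and identify $X\simeq P^4(k)\cup e^7$. Your Gysin/Serre spectral sequence route to that cell structure is valid and is explicitly acknowledged in the paper as an alternative to its own argument (which instead quotes Jiang's theorem that $X$ is an $S^3$-bundle over $S^4$ plus the classification of such bundles). The gap is in what comes next. The loop space decomposition of $P^4(k)\cup e^7$ is not something to be established in passing here; it is precisely the main theorem of Huang--Theriault \cite{HT} on $2$-connected $7$-dimensional Poincar\'e duality complexes, and it is the deep input to this theorem. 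Your sketch --- build a fibration diagram, construct a candidate $H$-map $S^3\{k\}\times\Omega S^7\to\Omega X$, check it is a weak equivalence by a Hurewicz/homology comparison --- names the shape of that argument but supplies none of it; in particular you never actually control the attaching map $S^6\to P^4(k)$ of the top cell, which is where all the work in \cite{HT} lives. Either cite \cite[Theorem 1.1]{HT} for the odd case, as the paper does, or accept that this step is unproved.

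The even case has a concrete missing hypothesis. The decomposition $\Omega(P^4(2^r)\cup e^7)\simeq S^3\{2^r\}\times\Omega S^7$ for $r\geq 3$ in \cite[Section 6]{HT} is conditional: it requires a map $P^4(2^r)\cup e^7\to S^4$ inducing a surjection on mod-$2$ homology in degree $4$. Your proposal never produces such a map, and your explanation of the restriction $r\geq 3$ in terms of obstructions internal to the Moore space $P^4(2^r)$ does not substitute for it. The paper obtains the map from the $S^3$-bundle structure on $X$ over $S^4$; in your setup you could get it by observing that the composite $X\to M\to S^4$ is a fibre bundle whose fibre is the restriction of the circle bundle to the fibre $S^2$ of $M\to S^4$, and this restriction is the Hopf bundle (hence has total space $S^3$) because the generator of $H^2(M;\mathbb{Z})$ restricts to a generator of $H^2(S^2;\mathbb{Z})$. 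Without some such step the $k=2^r$ case does not close.
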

Note that we still have cohomology rigidity in this case since the homotopy type of $\Omega M$ only depends $k$ which is determined by the square of a generator in $H^2(M;\mathbb{Z})$. But it is less interesting since the rigidity of $M$ without looping holds except for the case when $k$ is even and $M$ is Spin \cite{Yam}.
Also note that Theorem \ref{d=0thm} is only a partial result.
To explain the difficulty in this case, remark that the proof of Theorem \ref{d=0thm} heavily relies on the result of \cite{HT} on the loop decomposition of $2$-connected $7$-manifolds.
As discussed in \cite[Section 6]{HT}, the case when $k=2^r m$ with $m$ odd and greater than $1$ is much more difficult. Also, since it is known that $S^3\{2\}$ is not an $H$-space \cite{C}, we can not have a decomposition of the form $\Omega M \simeq S^1\times S^3\{2\}\times \Omega S^7$ for the case when $k=2$. In contrast, the rational homotopy of $M$ in this case is simple. As showed in Lemma \ref{d=0rationallemma}, $M$ is rationally homotopic equivalent to $\mathbb{C}P^3$ or $S^2\times S^4$. Moreover, it is well known that $\mathbb{C}P^3$ is not coformal \cite[Example 4.7]{NM}, while $S^2\times S^4$ is Koszul \cite[Example 5.1 and 5.4]{Ber}.

Before we close the Introduction, let us make two remarks. Firstly, our results provide further evidence on the Moore conjecture. Recall that the Moore conjecture states that a simply connected finite $CW$ complex $Z$ is rationally elliptic if and only if it has a finite homotopy exponent at all primes, or equivalently, $Z$ is rationally hyperbolic if and only if it has unbound homotopy exponent at some prime. For $M$ in our context, it is elliptic if and only if when $d\leq 2$, and in any of these cases by \cite{J, CMN, CMN2} $M$ has a finite homotopy exponent at all primes. When $d\geq 3$, $M$ is hyperbolic so that $\Omega M$ has $\Omega(S^2\vee S^3)$ as product summand, hence it has no bound homotopy exponent for any prime $p$ (see \cite{Boy} for instance). Secondly, Amor\'{o}s and Biswas \cite{AB} characterized simply connected rationally elliptic compact K\"{a}hler threefolds in terms of Hodge diamonds, and in particular, their second Betti numbers $b_2\leq 3$. For $M$ in our context, this is equivalent to $d\leq 2$, and our decompositions provide further information on the homotopy of $M$. For instance the homotopy groups of $M$ can be computed in terms of those of spheres.

The paper is organized as follows. In Section \ref{sec: prelim} we classify rank $3$ bundles over the $4$-manifold $N$. In Section \ref{sec: null}, we prove Lemma \ref{nulllemma} which implies that under Lemma \ref{Nbundlelemma} one component of the classifying map $f$ of the bundle $\xi$ over $N$ is trivial in a special case. This is crucial for proving Theorem \ref{decomthm}. In Section \ref{sec: pf}, we prove Theorem \ref{decomthm} by dividing it into two cases. Section \ref{sec: d=0} is devoted to the remaining case when $d=0$ and we prove Theorem \ref{d=0thm} there. We discuss the rational homotopy of $6$-manifolds and prove Theorem \ref{coformalthm} in Section \ref{sec: rational}.

\bigskip

\noindent{\bf Acknowledgements.}
Ruizhi Huang was supported by National Natural Science Foundation of China (Grant nos. 11801544 and 11688101), and ``Chen Jingrun'' Future Star Program of AMSS. He would like to thank Professor Stephen Theriault for the international online lecture series ``Loop Space Decomposition'', which stimulated his research interest in the homotopy of $6$-manifolds. He also want to thank Professor Yang Su for helpful discussions on obstructions to trivializing vector bundles.

\section{Rank $3$ bundles over $4$-manifolds} 
\label{sec: prelim}
In this section, we discuss necessary knowledge of rank $3$ vector bundles over simply connected $4$-manifolds, which will be used in the subsequent sections. There are various ways to study the classification of vector bundles. Here, we adopt an approach from homotopy theoretical point of view for the latter use.

Let $N$ be a simply connected $4$-manifold such that $H^2(N;\mathbb{Z})\cong \mathbb{Z}^{\oplus d}$ with $d\geq 0$. A rank $3$ vector bundle $\xi$ over $N$ is classified by a map $f: N\longrightarrow BSO(3)$. The sphere bundle of $\xi$
\[
S^2\stackrel{i}{\longrightarrow} M\stackrel{p}{\longrightarrow} N
\]
defines the closed $6$-manifold $M$. 
For $N$ there is the homotopy cofiber sequence
\begin{equation}\label{Ncofibreeq}
S^3\stackrel{\phi}{\longrightarrow} \bigvee_{i=1}^{d} S^2 \stackrel{\rho}{\longrightarrow} N \stackrel{q}{\longrightarrow} S^4\stackrel{\Sigma\phi}{\longrightarrow} \bigvee_{i=1}^{d} S^3,
\end{equation}
where $\phi$ is the attaching map of the top cell of $N$, $\rho$ is the injection of the $2$-skeleton, and $q$ is the pinch map onto the top cell. Let $s: S^1\cong SO(2)\rightarrow SO(3)$ be the canonical inclusion of Lie groups.

\begin{lemma}\label{Nbundlelemma}
There is a surjection 
\[
\Phi: [S^4, BSO(3)]\times [N, BS^1]\longrightarrow [N, BSO(3)]
\]
of pointed sets such that it restricts to $q^\ast$ on $[S^4, BSO(3)]$, and to $(Bs)_\ast$ on $[N, BS^1]$.
\end{lemma}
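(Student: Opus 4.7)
The plan is to define $\Phi$ via the homotopy coaction of $N$ coming from collapsing the $2$-skeleton. Since $N \simeq (\bigvee_{i=1}^d S^2)\cup_\phi e^4$, pinching a small $4$-disc inside the top cell gives a coaction $\mu: N \to N\vee S^4$ for which crushing the $N$-summand recovers $q$ and crushing the $S^4$-summand recovers the identity on $N$. This coaction endows the pointed set $[N, BSO(3)]$ with an action of the abelian group $[S^4, BSO(3)] = \pi_4(BSO(3))$ by $\alpha \cdot h = (h\vee \alpha)\circ \mu$, and I set
\[
\Phi(\alpha, \beta) = \alpha \cdot (Bs\circ \beta).
\]
The properties of $\mu$ above give immediately $\Phi(\alpha, \ast) = q^\ast(\alpha)$ and $\Phi(\ast, \beta) = (Bs)_\ast(\beta)$, so $\Phi$ restricts to the two factors as required.

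For surjectivity, given $f: N \to BSO(3)$, I construct $(\alpha, \beta)$ with $f \simeq \Phi(\alpha, \beta)$ in three steps. First, I lift $f\circ \rho$ through $Bs$. The map $\pi_2(Bs):\pi_2(BS^1)=\mathbb{Z}\to \pi_2(BSO(3))=\pi_1(SO(3))=\mathbb{Z}/2\mathbb{Z}$ is reduction mod $2$, hence surjective, so applying this on each wedge summand shows $(Bs)_\ast: [\bigvee_{i=1}^d S^2, BS^1] \to [\bigvee_{i=1}^d S^2, BSO(3)]$ is surjective and there is $\beta_0: \bigvee_{i=1}^d S^2 \to BS^1$ with $Bs\circ \beta_0 \simeq f\circ \rho$. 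Second, the obstruction to extending $\beta_0$ across the $4$-cell of $N$ lies in $\pi_3(BS^1)=\pi_3(\cp^\infty)=0$, so $\beta_0$ extends to some $\beta: N\to BS^1$.

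Third, by the homotopy extension property I adjust $f$ within its homotopy class so that $f\circ \rho = Bs\circ \beta\circ \rho$ on the nose. Then $f$ and $Bs\circ \beta$ are two extensions across the single top cell of $N$ of a common map on the $2$-skeleton, so the standard difference-cochain construction (equivalently, the exactness of the Puppe sequence of the cofibration $\bigvee_{i=1}^d S^2 \stackrel{\rho}{\to} N \stackrel{q}{\to} S^4$) produces $\alpha \in [S^4, BSO(3)]$ with $f \simeq \alpha \cdot (Bs\circ \beta) = \Phi(\alpha, \beta)$, which gives the surjectivity.

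The main technical point to verify is the identification of the coaction-defined action with the classical obstruction-theoretic difference operation on extensions, so that two elements of $[N, BSO(3)]$ with the same image under $\rho^\ast$ necessarily lie in a single orbit of the $[S^4, BSO(3)]$-action. Once this identification is in place, the transitivity used in the third step is precisely what the Puppe exact sequence provides after the HEP adjustment, and the argument is complete.
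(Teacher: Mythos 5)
Your proposal is correct and follows essentially the same route as the paper: both define $\Phi$ through the coaction $N\to N\vee S^4$ coming from the cofibration $S^3\stackrel{\phi}{\to}\bigvee_{i=1}^{d}S^2\stackrel{\rho}{\to}N\stackrel{q}{\to}S^4$, both use that $(Bs)_\ast$ is surjective on the $2$-skeleton (reduction mod $2$ on each $\mathbb{Z}$ summand) together with the identification $[N,BS^1]\cong[\bigvee_{i=1}^{d}S^2,BS^1]$ to produce $\beta$, and both conclude via the standard fact that the orbits of the $[S^4,BSO(3)]$-action on $[N,BSO(3)]$ are exactly the fibres of $\rho^\ast$. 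The only cosmetic difference is that you phrase this last step through the HEP and difference cochains, while the paper invokes the strong exactness of the Puppe sequence directly.
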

\begin{proof}
By (\ref{Ncofibreeq}), there is the exact sequence of pointed sets
\[
0=[\mathop{\bigvee}\limits_{i=1}^{d} S^3, BSO(3)] \stackrel{}{\longrightarrow}[S^4, BSO(3)] \stackrel{q^\ast}{\longrightarrow} [N, BSO(3)] \stackrel{\rho^\ast}{\longrightarrow} [\bigvee_{i=1}^{d} S^2, BSO(3)] \stackrel{}{\longrightarrow} [S^3, BSO(3)]=0,
\]
in a strong sense that, there is an action of $[S^4, BSO(3)]$ on $[N, BSO(3)]$ through $q^\ast$ such that the sets $\rho^{\ast-1} (x)$, for $x\in [\mathop{\bigvee}\limits_{i=1}^{d} S^2, BSO(3)]$ are precisely the orbits.
It is known that $[\mathop{\bigvee}\limits_{i=1}^{d} S^2, BSO(3)]\cong \mathop{\oplus}\limits_{d}\mathbb{Z}/2\mathbb{Z}$ and $[S^4, BSO(3)]\cong \mathbb{Z}$. 
Moreover, there is the commutative diagram
\[
\xymatrix{
[N, BS^1] \ar[r]^<<<<<{\rho^\ast}_<<<<<{\cong} \ar[d]^{(Bs)_\ast}& 
[\mathop{\bigvee}\limits_{i=1}^{d} S^2, BS^1] \ar[d]^{(Bs)_\ast} \ar[r]^<<<<<{\cong} 
& \mathop{\oplus}\limits_{d}\mathbb{Z} \ar[d]^{\mathop{\oplus}\limits_{d}\rho_2} \\
[N, BSO(3)] \ar[r]^<<<<{\rho^\ast}  
& [\mathop{\bigvee}\limits_{i=1}^{d} S^2, BSO(3)]\ar[r]^<<<{\cong} 
& \mathop{\oplus}\limits_{d}\mathbb{Z}/2\mathbb{Z},
}
\]
where $\rho^\ast$ is an isomorphism onto $[\mathop{\bigvee}\limits_{i=1}^{d} S^2, BS^1]\cong \mathop{\oplus}\limits_{d}\mathbb{Z}$, $\rho_2$ is the mod-$2$ reduction, and hence $(Bs)_\ast$ is surjective onto $[\mathop{\bigvee}\limits_{i=1}^{d} S^2, BSO(3)]$. Now for any $f\in [N, BSO(3)]$, $\rho^\ast(f)=(Bs)_\ast(x)$ for some $x\in [\mathop{\bigvee}\limits_{i=1}^{d} S^2, BS^1]$. Denote $\alpha=(\rho^{\ast-1})(x)$, then $Bs_\ast(\alpha)$ and $f$ belong to the same orbit of the action, for they have same image in $[\mathop{\bigvee}\limits_{i=1}^{d} S^2, BSO(3)]$ through $\rho^\ast$. Hence, there exists a $f^\prime\in [S^4, BSO(3)]$ such that $q^\ast(f^\prime)\cdot (Bs_\ast(\alpha))=f$.
This completes the proof of the lemma.
\end{proof}

From Lemma \ref{Nbundlelemma} and its proof, for the classifying map $f: N\rightarrow BSO(3)$, we have associated a pair of maps 
\begin{equation}\label{Nbundleeq}
(f^\prime, \alpha)\in [S^4, BSO(3)]\times [N, BS^1] ~{\rm such}~{\rm that}~q^\ast(f^\prime)\cdot (Bs_\ast(\alpha))=f, ~\ \omega_2(\xi)\equiv \alpha~{\rm mod}~2.
\end{equation} 
We also notice that if $\rho^\ast(f)\neq0$, or equivalently, $\xi$ is non-Spin, the element $\alpha$ can be always chosen to be primitive, that is, $\alpha$ is not divisible by any integer $k$ with $k\neq \pm 1$. This is important for our later use.

Let $\pi: W\rightarrow N$ be a map from a closed manifold $W$. The pullback of the bundle $\xi$ along $\pi$ has an associated sphere bundle
\[
S^2\stackrel{\iota}{\longrightarrow} Z\stackrel{\mathfrak{p}}{\longrightarrow} W
\]
which defines the closed manifold $Z$. The following lemma is critical for proving Proposition \ref{pdtbundleprop}.
\begin{lemma}\label{pdtbundlelemma}
Suppose for $W$ there is a homotopy cofibration
\[
W_{m-1}\stackrel{\varrho}{\longrightarrow} W\stackrel{\mathfrak{q}}{\longrightarrow} S^{m},
\]
such that $\pi\circ \varrho$ factors as
\[
W_{m-1}\stackrel{\pi_\prime}{\longrightarrow}\bigvee_{i=1}^{d} S^2 \stackrel{\rho}{\longrightarrow} N
\]
for some $\pi_\prime$, where $W_{m-1}$ is the $(m-1)$-skeleton of $W$.
Then if $f^\prime \circ q \circ \pi$ and $\alpha \circ \pi$ are both null homotopic, the bundle $\pi^\ast(\xi)$ is trivial, and in particular
\[
Z\cong S^2\times W.
\]
\end{lemma}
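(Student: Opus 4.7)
The plan is to pull back the decomposition of $f$ provided by Lemma \ref{Nbundlelemma} along $\pi$, and verify that both resulting pieces are null homotopic.

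First, by Lemma \ref{Nbundlelemma} together with (\ref{Nbundleeq}), the classifying map $f: N \to BSO(3)$ of $\xi$ factors as $f \simeq ((Bs\circ \alpha) \vee f') \circ \psi$, where $\psi: N \to N \vee S^4$ is the coaction of the Puppe cofiber sequence of $\bigvee_{i=1}^{d} S^2 \xrightarrow{\rho} N \xrightarrow{q} S^4$. Since $\pi^*(\xi)$ is classified by $f \circ \pi$, it suffices to show $f \circ \pi \simeq *$.

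Next, I would exploit the assumption $\pi \circ \varrho = \rho \circ \pi_\prime$. Since $q \circ \pi \circ \varrho = q \circ \rho \circ \pi_\prime \simeq *$, the map $q \circ \pi: W \to S^4$ factors through the cofiber as $q \circ \pi \simeq \bar\pi \circ \mathfrak{q}$ for some $\bar\pi: S^m \to S^4$. The induced morphism of cofibrations then yields, by naturality of the coaction, $\psi \circ \pi \simeq (\pi \vee \bar\pi) \circ \psi_W$, where $\psi_W: W \to W \vee S^m$ is the coaction for $W$'s cofibration. Combining these,
\[
f \circ \pi \simeq ((Bs\circ \alpha \circ \pi) \vee (f' \circ \bar\pi)) \circ \psi_W.
\]
The hypothesis $\alpha \circ \pi \simeq *$ makes the first wedge summand null homotopic, so the map above is homotopic to $f' \circ \bar\pi \circ p_2 \circ \psi_W$, where $p_2: W \vee S^m \to S^m$ is the projection. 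Since $p_2 \circ \psi_W = \mathfrak{q}$ by the defining property of the coaction, this equals $f' \circ \bar\pi \circ \mathfrak{q} \simeq f' \circ q \circ \pi$, which is null homotopic by the second hypothesis. Hence $f \circ \pi \simeq *$, so $\pi^*(\xi)$ is trivial and its sphere bundle is $Z \cong W \times S^2$.

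The main subtlety is that since $BSO(3)$ is not an $H$-space, $[N, BSO(3)]$ is only a pointed set, so the ``addition'' from the cofibration action has to be tracked through the coaction $\psi$ rather than by formally adding maps. Once naturality of $\psi$ is invoked, the argument collapses into the two explicit null-homotopies above.
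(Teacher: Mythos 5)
Your argument is correct and is essentially the paper's own proof: both pull the decomposition $f = q^\ast(f^\prime)\cdot Bs_\ast(\alpha)$ of Lemma \ref{Nbundlelemma} back along $\pi$ and use compatibility of the cofibration actions to identify $f\circ\pi$ with $(f^\prime\circ q\circ\pi)\cdot Bs_\ast(\alpha\circ\pi)$, which the two hypotheses kill; your naturality statement $\psi\circ\pi\simeq(\pi\vee\bar\pi)\circ\psi_W$ is precisely what the paper phrases as the compatibility of the $[S^4,BSO(3)]$- and $[S^m,BSO(3)]$-actions through $\pi^{\prime\ast}$. The only cosmetic difference is that you unwind the action through the explicit coaction map, exactly as the paper itself does in the proof of Proposition \ref{classbundleNprop}.
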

\begin{proof}
By the assumption, there is a diagram of homotopy cofibrations
\[
\xymatrix{
W_{m-1} \ar[r]^{\varrho} \ar[d]^{\pi_\prime}  &
W \ar[r]^{\mathfrak{q}} \ar[d]^{\pi} &
S^m \ar[d]^{\pi^\prime} \\
\mathop{\bigvee}\limits_{d} S^2 \ar[r]^{\rho}&
N \ar[r]^{q} &
S^4,
}
\]
which defines the map $\pi^\prime$. It follows that there is a morphism of exact sequences of pointed sets
\[
\xymatrix{
[S^4, BSO(3)]  \ar[r]^{q^\ast}  \ar[d]^{\pi^{\prime\ast}}&
[N, BSO(3)] \ar[r]^<<<<{\rho^\ast} \ar[d]^{\pi^\ast}&
[\mathop{\bigvee}\limits_{i=1}^{d} S^2, BSO(3)] \ar[d]^{\pi_\prime^\ast} \\
[S^m, BSO(3)]  \ar[r]^{\mathfrak{q}^\ast} &
[W, BSO(3)] \ar[r]^<<<<{\varrho^\ast} &
[W_{m-1}, BSO(3)], 
}
\]
such that the action of $[S^4, BSO(3)]$ on $[N, BSO(3)]$ is compatible with that of $[S^m, BSO(3)]$ on $[W, BSO(3)]$ through $\pi^{\prime\ast}$. Hence by (\ref{Nbundleeq}) the classifying map $f\circ\pi$ of $\pi^\ast(\xi)$ satisfies
\[
\begin{split}
f\circ\pi
&=\pi^\ast\big(q^\ast(f^\prime)\cdot (Bs_\ast(\alpha))\big)\\
&=\mathfrak{q}^\ast(\pi^{\prime\ast}(f^\prime))\cdot \pi^\ast(Bs_\ast(\alpha))\\ 
&=\pi^\ast(q^\ast(f^\prime))\cdot\pi^\ast((Bs_\ast(\alpha)))\\
&= (f^\prime \circ q \circ \pi)\cdot Bs_\ast(\alpha \circ \pi),
\end{split}
\]
which is null homotopic by the assumption. The lemma then follows immediately. 
\end{proof}

Lemma \ref{Nbundlelemma} also gives a byproduct on the classification of rank $3$ vector bundles over $N$ via characteristic classes, which could be also proved by other methods like the classical obstruction theory.
\begin{proposition}\label{classbundleNprop}
A rank $3$ vector bundle $\xi$ over $N$ is completely determined by its second Stiefel-Whitney class $\omega_2(\xi)$ and its first Pontryagin class $p_1(\xi)$.
\end{proposition}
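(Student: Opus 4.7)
By Lemma \ref{Nbundlelemma}, every rank $3$ bundle $\xi$ over $N$ can be written as $\xi=q^\ast(f^\prime)\cdot (Bs)_\ast(\alpha)$ for some $(f^\prime,\alpha)\in [S^4,BSO(3)]\times [N,BS^1]$, where the dot denotes the natural action of $[S^4,BSO(3)]\cong\mathbb{Z}$ coming from the coaction $N\to N\vee S^4$ associated with the cofibration (\ref{Ncofibreeq}). The plan is to compute both $\omega_2(\xi)$ and $p_1(\xi)$ explicitly in terms of this pair, and then to use freeness of the action to deduce injectivity of $\xi\mapsto(\omega_2(\xi),p_1(\xi))$.

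For the characteristic classes, $\omega_2(\xi)=\rho_2(\alpha)$ is already recorded in (\ref{Nbundleeq}), since $H^2(S^4;\mathbb{Z}/2)=0$ kills the contribution from $q^\ast(f^\prime)$. For $p_1$, additivity under the coaction yields
\[
p_1(\xi)=p_1((Bs)_\ast(\alpha))+q^\ast(p_1(f^\prime))\in H^4(N;\mathbb{Z})\cong \mathbb{Z}.
\]
Here $(Bs)_\ast(\alpha)$ classifies $L_{\mathbb{R}}\oplus\underline{\mathbb{R}}$ with $c_1(L)=\alpha$, so $p_1((Bs)_\ast(\alpha))=\alpha^2$. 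Under the isomorphism $\pi_4(BSO(3))\cong \mathbb{Z}$ coming from the double cover $BSU(2)\to BSO(3)$ (which induces an isomorphism on $\pi_4$ since $SU(2)\to SO(3)$ is a $\pi_3$-isomorphism), the generator has $p_1=\pm 4$, as the formula $p_1(\mathrm{ad})=4c_2-c_1^2$ for the adjoint representation of $SU(2)$ shows. Hence, after identifying $H^4(N;\mathbb{Z})$ with $\mathbb{Z}$ via the fundamental class, one obtains $p_1(\xi)=\alpha^2\pm 4f^\prime$.

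For injectivity, suppose $\xi_1,\xi_2$ share both $\omega_2$ and $p_1$, and write $\xi_i=q^\ast(f_i)\cdot (Bs)_\ast(\alpha_i)$. Equality of $\omega_2$ forces $\rho_2(\alpha_1)=\rho_2(\alpha_2)$, so $(Bs)_\ast(\alpha_1)$ and $(Bs)_\ast(\alpha_2)$ lie in a common $\rho^\ast$-fiber, hence in a common orbit of the action; choosing $e\in\mathbb{Z}$ with $(Bs)_\ast(\alpha_1)=q^\ast(e)\cdot (Bs)_\ast(\alpha_2)$, one rewrites $\xi_1=q^\ast(f_1+e)\cdot (Bs)_\ast(\alpha_2)$. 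Matching $p_1(\xi_1)=p_1(\xi_2)$ via the displayed formula then forces $f_1+e=f_2$. Since the Puppe sequence used in the proof of Lemma \ref{Nbundlelemma} starts with $[\bigvee_{i=1}^d S^3,BSO(3)]=0$, the map $q^\ast$ is injective, the action is free, and so $\xi_1=\xi_2$. The main nontrivial inputs are the additivity of $p_1$ under the coaction and the identification of $p_1$ on the generator of $\pi_4(BSO(3))$ as $\pm 4$; both are standard and constitute the only real obstacle in the argument.
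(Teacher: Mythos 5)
Your proof is correct and follows essentially the same route as the paper's: decompose $\xi$ via Lemma \ref{Nbundlelemma} as a pair $(f^\prime,\alpha)$, observe that $\omega_2(\xi)\equiv\alpha \bmod 2$ and $p_1(\xi)=\alpha^2+q^\ast(p_1(f^\prime))$, and conclude via the isomorphism $\tfrac{p_1}{4}\colon [S^4,BSO(3)]\to H^4(S^4;\mathbb{Z})$. The only cosmetic difference is that the paper arranges for both bundles to share the same $\alpha$ from the outset (using the surjectivity in Lemma \ref{Nbundlelemma}) rather than merging orbits afterwards; your closing appeal to freeness of the action is superfluous, since $f_1+e=f_2$ in $[S^4,BSO(3)]$ already yields $\xi_1=\xi_2$ directly.
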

\begin{proof}
Given any two rank $3$ vector bundles $\xi_1$ and $\xi_2$ over $N$. Suppose that $\omega_2(\xi_1)=\omega_2(\xi_2)$ and $p_1(\xi_1)=p_1(\xi_2)$. We want to show that $\xi_1\cong \xi_2$, or equivalently, $f_1\simeq f_2$, where $f_1$ and $f_2: N\rightarrow BSO(3)$ are the classifying maps of $\xi_1$ and $\xi_2$ respectively. By Lemma \ref{Nbundlelemma} and (\ref{Nbundleeq}), we have $f_1=q^\ast(f_1^\prime)\cdot (Bs_\ast(\alpha))$ for a pair of maps $(f_1^\prime, \alpha)\in [S^4, BSO(3)]\times [N, BS^1]$ such that $\omega_2(\xi_1)\equiv \alpha~{\rm mod}~2$. Since $\omega_2(\xi_1)=\omega_2(\xi_2)$, there exists some $f_2^\prime\in [S^4, BSO(3)]$ such that $f_2=q^\ast(f_2^\prime)\cdot (Bs_\ast(\alpha))$. It follows that, to show $f_1\simeq f_2$, it suffices to show $f_1^\prime\simeq f_2^\prime$. Indeed, for either $\xi_i$ the expression of $f_i$ can be explicitly described as
\[
f_i: N\stackrel{\mu^\prime}{\longrightarrow} N\vee S^4\stackrel{\alpha\vee f_i^\prime}{\longrightarrow} BS^1\vee BSO(3)\stackrel{Bs\vee {\rm id}}{\longrightarrow}BSO(3)\vee BSO(3) \stackrel{\nabla}{\longrightarrow}BSO(3),
\]
where $\mu^\prime$ is the co-action map, $\nabla$ is the folding map. In particular, it is easy to see that
\begin{equation}\label{p1xiieq}
p_1(\xi_i)=q^\ast(p_1(f_i^\prime))+\alpha^2,
\end{equation}
where we denote $p_1(f_i^\prime)$ to be the first Pontryagin class of the bundle over $S^4$ determined by $f_i^\prime$. Since $p_1(\xi_1)=p_1(\xi_2)$, (\ref{p1xiieq}) implies that $q^\ast(p_1(f_1^\prime))=q^\ast(p_2(f_i^\prime))$. Moreover, it is clear that $q^\ast: H^4(S^4;\mathbb{Z})\rightarrow H^4(N;\mathbb{Z})$ is an isomorphism. Hence $p_1(f_1^\prime)=p_1(f_2^\prime)$. 
Now since $[S^4, BSO(3)]\simeq \mathbb{Z}$ and the morphism $\frac{p_1}{4}:[S^4, BSO(3)]\rightarrow H^4(S^4;\mathbb{Z})$, sending each map to one fourth of the first Pontryagin class of the associated bundle, is an isomorphism \cite{HBJ}, we see that $f_1^\prime\simeq f_2^\prime$. Then $f_1\simeq f_2$ and the proposition follows.
\end{proof}

\section{The induced map between top cells} 
\label{sec: null} 
Let $N$ be a simply connected closed $4$-manifold such that $H^2(N;\mathbb{Z})\cong \mathbb{Z}^{\oplus d}$ with $d\geq 1$. Consider the circle bundle
\[
S^1\stackrel{j}{\longrightarrow} Y\stackrel{\pi}{\longrightarrow} N
\]
classified by a primitive element $\beta\in H^2(N;\mathbb{Z})$, which defines the simply connected $5$-manifold $Y$. 
By \cite[Lemma 1]{DL}, $Y$ has cell structure of the form
\[
Y\simeq  \mathop{\bigvee}\limits_{d-1} (S^2\vee S^3)\cup e^5. 
\]
Then by the cellular approximation theorem, there is the diagram of homotopy cofibration 
\begin{equation}\label{piprimediag}
\begin{gathered}
 \xymatrix{
\mathop{\bigvee}\limits_{d-1} (S^2\vee S^3) \ar[d] \ar[r]^<<<{\varrho} &
Y  \ar[d]^{\pi} \ar[r]^>>>>{\mathfrak{q}} &
S^5 \ar[d]^{\pi^\prime} \\
\mathop{\bigvee}\limits_{d} S^2 \ar[r]^{\rho} &
N\ar[r]^{q} &
S^4,
}
\end{gathered}
\end{equation}
where the bottom cofibration is part of (\ref{Ncofibreeq}), $\varrho$ is the inclusion of the $3$-skeleton of $Y$ followed by the quotient $\mathfrak{q}$, $\pi^\prime$ is induced from $\pi$. In this section, we prove the following key lemma for understanding rank $3$-bundles over $Y$ in a special case. 
Let $[N]$ be the fundamental class of $N$. Let $\langle x\cup y, [N] \rangle\in \mathbb{Z}$ be the canonical pairing for any cohomology classes $x$, $y\in H^2(N;\mathbb{Z})$.

\begin{lemma}\label{nulllemma}
The induced map $\pi^\prime$ in Diagram (\ref{piprimediag}) is null homotopic when $\langle \beta^2, [N]\rangle$ is odd.
\end{lemma}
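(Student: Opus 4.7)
The strategy is to reduce to the model case $N = \mathbb{C}P^2$ via the classifying map of the class $\beta$, then verify nullity in that model by a short Steenrod computation in $\mathbb{C}P^3$, and finally descend using the oddness hypothesis.

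Since $\mathbb{C}P^2$ is the $4$-skeleton of $K(\mathbb{Z},2) = \mathbb{C}P^\infty$ and $N$ is $4$-dimensional, cellular approximation of the classifying map of $\beta$ produces a map $\tilde\beta : N \to \mathbb{C}P^2$ with $\tilde\beta^\ast u = \beta$, where $u \in H^2(\mathbb{C}P^2;\mathbb{Z})$ is the generator. The identification $\xi_\beta \cong \tilde\beta^\ast \mathcal{O}(1)$ realizes $Y$ as the pullback along $\tilde\beta$ of the Hopf fibration $h : S^5 \to \mathbb{C}P^2$, with total-space map $\hat\beta : Y \to S^5$. Writing $q$ for both collapse maps $N \to S^4$ and $\mathbb{C}P^2 \to S^4$, cellular approximation also yields a self-map $\bar\beta : S^4 \to S^4$ with $\bar\beta \circ q = q \circ \tilde\beta$; a short cohomology chase shows that its degree equals $\langle \tilde\beta^\ast u^2, [N]\rangle = \langle \beta^2, [N]\rangle$, which is odd by assumption.

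Next, the composite $\pi'' := q \circ h : S^5 \to S^4$ is null-homotopic. Indeed, its mapping cone is $\mathbb{C}P^3/\mathbb{C}P^1$ (using $\mathbb{C}P^3 = \mathbb{C}P^2 \cup_h e^6$), and in $H^\ast(\mathbb{C}P^3;\mathbb{F}_2) = \mathbb{F}_2[u]/(u^4)$ the Cartan formula gives $\mathrm{Sq}^2 u^2 = 2u^3 \equiv 0 \pmod 2$. Since the nontrivial class of $\pi_5(S^4) = \mathbb{Z}/2$ is detected by $\mathrm{Sq}^2$ on $H^4$ of a two-cell complex $S^4 \cup e^6$, this forces $\pi'' \simeq \ast$. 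Chasing the combined diagram then yields $\bar\beta \circ \pi' \circ \mathfrak{q} = \bar\beta \circ q \circ \pi = q \circ \tilde\beta \circ \pi = q \circ h \circ \hat\beta = \pi'' \circ \hat\beta \simeq \ast$ as maps $Y \to S^4$.

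To conclude $\pi' \simeq \ast$ I would verify that (i) $\mathfrak{q}^\ast : \pi_5(S^4) \to [Y, S^4]$ is a bijection and (ii) $\bar\beta_\ast$ acts as the identity on $[Y, S^4]$. Both rest on $H^4(Y;\mathbb{Z}) = 0$: primitivity of $\beta$ together with unimodularity of the intersection form of $N$ makes $\beta \cup : H^2(N) \to H^4(N)$ surjective, whence $H^4(Y;\mathbb{Z}) = 0$ from the Gysin sequence, and likewise $H^5(Y;\mathbb{F}_2) \cong \mathbb{F}_2$. Obstruction theory for the $3$-connected target $S^4$ then identifies $[Y, S^4]$ with the secondary invariant group $H^5(Y;\mathbb{Z}/2) \cong \mathbb{Z}/2$; the cofibration $\bigvee_{d-1}(S^2 \vee S^3) \hookrightarrow Y \to S^5$ (noting that the mapping set from $\bigvee_{d-1}(S^2 \vee S^3)$ to $S^4$ is trivial) makes $\mathfrak{q}^\ast$ a surjection between two $\mathbb{Z}/2$'s, hence a bijection; and post-composition with an odd-degree self-map of $S^4$ acts as the identity on the $2$-torsion set $[Y, S^4]$. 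This last obstruction-theoretic step is the most delicate part of the argument, but it hinges on $H^4(Y;\mathbb{Z}) = 0$, a direct consequence of the primitivity of $\beta$ and Poincar\'e duality on $N$.
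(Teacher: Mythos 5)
Your strategy is the same as the paper's (factor $\beta$ through $\mathbb{C}P^2$, pull back the Hopf fibration, kill $q_0\circ\pi_0$ by the $\mathrm{Sq}^2$ computation in $\mathbb{C}P^3$, and exploit the odd degree of the induced self-map of $S^4$), and your first two paragraphs are correct. The gap is in the descent step. You only establish the relation $\bar{\beta}\circ\pi^\prime\circ\mathfrak{q}\simeq \ast$ in $[Y,S^4]$ and then try to cancel $\mathfrak{q}$, which requires $\mathfrak{q}^\ast\colon\pi_5(S^4)\to[Y,S^4]$ to be injective. Your identification $[Y,S^4]\cong H^5(Y;\mathbb{Z}/2\mathbb{Z})$ ignores the indeterminacy of the secondary obstruction: the correct statement is $[Y,S^4]\cong H^5(Y;\mathbb{Z}/2\mathbb{Z})/\mathrm{im}\big(\mathrm{Sq}^2\colon H^3(Y;\mathbb{Z}/2\mathbb{Z})\to H^5(Y;\mathbb{Z}/2\mathbb{Z})\big)$. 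Equivalently, exactness of $[\bigvee(S^3\vee S^4),S^4]\xrightarrow{(\Sigma\varphi)^\ast}\pi_5(S^4)\xrightarrow{\mathfrak{q}^\ast}[Y,S^4]$ in the Puppe sequence shows that $\mathfrak{q}^\ast(\eta_4)=\ast$ as soon as the attaching map $\varphi$ of the top cell of $Y$ has a nontrivial $\eta_3$-component on some $S^3$ wedge summand, which by the Wu formula happens exactly when $Y$ is non-Spin. This case is not excluded by the hypothesis: $\langle\beta^2,[N]\rangle$ odd forces $w_2(N)\neq 0$ (again by Wu), and $Y$ is Spin if and only if $w_2(N)\equiv\beta \bmod 2$; for instance $N=\mathbb{C}P^2\#\mathbb{C}P^2$ with $\beta$ a generator of one summand has $\langle\beta^2,[N]\rangle=1$ but $w_2(N)\not\equiv\beta$, so $[Y,S^4]=\ast$, your relation is vacuous, and no conclusion about $\pi^\prime$ follows.

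The repair is to avoid precomposing with $\mathfrak{q}$ altogether. Since $[\,S^2\vee S^3, S^5]=0$, the map $\hat{\beta}\colon Y\to S^5$ also descends to a map $\hat{\beta}^\prime\colon S^5\to S^5$ of top cells, and by taking cellular representatives throughout one obtains a homotopy commutative cube whose front face reads $\bar{\beta}\circ\pi^\prime\simeq\pi^{\prime\prime}\circ\hat{\beta}^\prime$ directly as an identity in $\pi_5(S^4)\cong\mathbb{Z}/2\mathbb{Z}$; the right-hand side is null because $\pi^{\prime\prime}\simeq\ast$, and the left-hand side equals $\pi^\prime$ because $\bar{\beta}$ has odd degree. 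This is precisely Diagram~(\ref{oddlemmadiag}) in the paper. With that replacement your argument closes.
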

\begin{proof}
The primitive element $\beta$ is represented by a map $\beta: N\rightarrow \mathbb{C}P^\infty\simeq K(\mathbb{Z},2)$. By the cellular approximation theorem $\beta$ factors through $\mathbb{C}P^2$
\[
\beta: N\stackrel{\widetilde{\beta}}{\longrightarrow}\mathbb{C}P^2\stackrel{x}{\longrightarrow}\mathbb{C}P^\infty,
\]
which defines the map $\widetilde{\beta}$, and $x$ represents a generator $x\in H^2(\mathbb{C}P^2;\mathbb{Z})$. The factorization gives a diagram of circle bundles
\begin{equation}
\label{NCP2bundleeq}
\begin{gathered}
 \xymatrix{
S^1 \ar@{=}[d] \ar[r]^{j} &
Y \ar[r]^{\pi} \ar[d]^{\widehat{\beta}}&
N \ar[d]^{\widetilde{\beta}}\\
S^1 \ar[r]^{} &
S^5 \ar[r]^{\pi_0} &
\mathbb{C}P^2,
}
\end{gathered}
\end{equation}
where the bundle in the second row is classified by $x$, and $\widehat{\beta}$ is the induced map. By the cellular approximation theorem, there is a homotopy commutative diagram
\begin{gather}
\begin{aligned}
\xymatrix{
 & Y \ar[dl]_{\mathfrak{q}}  \ar[rr]^{\pi}  \ar[dd]^<<<<{\widehat{\beta}}|!{[d];[d]}\hole && 
 N \ar[dl]_{q}  \ar[dd]^{\widetilde{\beta}}  \\
S^5\ar[dd]^{\widehat{\beta}^\prime}  \ar[rr]^<<<<<{\pi^\prime}  &&
S^4 \ar[dd]^<<<<{\widetilde{\beta}^\prime} \\
  & S^5 \ar@{=}[dl]  \ar[rr]^<<<<<{\pi_0}|!{[r];[r]}\hole  && 
 \mathbb{C}P^2\ar[dl]_{q_0}    \\
 S^5   \ar[rr]^{\pi_0^\prime}  &&
 S^4,  
}
\end{aligned}
\label{oddlemmadiag}
\end{gather}
where the rear and top faces are the right squares in Diagram (\ref{NCP2bundleeq}) and Diagram (\ref{piprimediag}) respectively, $q_0$ is the quotient map onto the top cell of $\mathbb{C}P^2$, $\pi_0^\prime$ is defined to be $q_0\circ\pi_0$, and $\widehat{\beta}^\prime$ and $\widetilde{\beta}^\prime$ are the induced maps. By the homotopy commutativity of the right face of Diagram (\ref{oddlemmadiag}), the assumption that $\langle \beta^2, [N]\rangle $ is odd is equivalent to $\widetilde{\beta}^\prime$ being odd degree.
Further, since the homotopy cofibre of $\pi_0$ is $\mathbb{C}P^3$ for which the Steenrod operation $Sq^2: H^4(\mathbb{C}P^3;\mathbb{Z}/2\mathbb{Z})\rightarrow H^6(\mathbb{C}P^3;\mathbb{Z}/2\mathbb{Z})$ is trivial, we obtain that $\pi_0^\prime=q_0\circ\pi_0$ is null homotopic.
Now consider the front face of Diagram (\ref{oddlemmadiag}). Combining the above arguments and the fact that $\pi_5(S^4)\cong \mathbb{Z}/2\mathbb{Z}\{\eta_4\}$ \cite{Tod} we see that
$
\pi^\prime \simeq \widetilde{\beta}^\prime\circ\pi^\prime\simeq \pi_0^\prime\circ \widehat{\beta}^\prime 
$
is null homotopic. This proves the lemma.
\end{proof}

\section{Proof of Theorem \ref{decomthm}} 
\label{sec: pf} 
Let $N$ be a simply connected $4$-manifold such that $H^2(N;\mathbb{Z})\cong \mathbb{Z}^{\oplus d}$ with $d\geq 1$. A rank $3$ vector bundle $\xi$ over $N$ is classified by a map $f: N\longrightarrow BSO(3)$ with the associated sphere bundle
\[
S^2\stackrel{i}{\longrightarrow} M\stackrel{p}{\longrightarrow} N,
\]
which defines the closed $6$-manifold $M$. 
Recall by Lemma \ref{Nbundlelemma} and (\ref{Nbundleeq}), the classifying map $f: N\rightarrow BSO(3)$ for the bundle $\xi$ is determined by a pair of maps $(f^\prime, \alpha)\in [S^4, BSO(3)]\times [N, BS^1]$ such that $f=q^\ast(f^\prime)\cdot (Bs)_\ast(\alpha)$ and $\omega_2(\xi)\equiv \alpha~{\rm mod}~2$, where $q$ and $s$ are defined before Lemma \ref{Nbundlelemma}. Moreover, by the discussion after Lemma \ref{Nbundlelemma}, when $\xi$ is non-Spin we suppose that $\alpha$ is primitive.

For the loop homotopy of $M$, we may study $S^1$-bundles over $M$ pullback from those over the $4$-manifold $N$.
Consider the circle bundle
\begin{equation}\label{thes1bnundle}
S^1\stackrel{j}{\longrightarrow} Y\stackrel{\pi}{\longrightarrow} N
\end{equation}
classified by a primitive element $\beta\in H^2(N;\mathbb{Z})$, which defines the simply connected $5$-manifold $Y$. 
Based on the previous remark on the choice of $\alpha$, we make the following convention on the choice of $\beta$:
\begin{itemize}
\item $\beta=\alpha$ if $\xi$ is non-Spin, or
\item $\beta$ can be any primitive element if $\xi$ is Spin.
\end{itemize}

The remaining of this section is devoted the proof of Theorem \ref{decomthm} by dividing it into two cases due to the parity of $\langle \beta^2, [N]\rangle$. In Subsection \ref{subsec: odd}, we first prove Theorem \ref{decomthm} using Lemma \ref{nulllemma} under the assumption that $\langle \beta^2, [N]\rangle$ is odd . This is the case when the circle bundle (\ref{thes1bnundle}) plays an essential role. However, when $\langle \beta^2, [N]\rangle$ is even, we have to apply a different method to prove Theorem \ref{decomthm}. This is done in Subsection \ref{subsec: even}.
\subsection{Case I: $\langle \beta^2, [N]\rangle$ is odd}
\label{subsec: odd}
In this case, by the choice of the circle bundle (\ref{thes1bnundle}), consider the pullback of fibre bundles
\begin{equation}\label{keydiag}
\begin{gathered}
 \xymatrix{
&
S^2 \ar@{=}[r] \ar[d]^{\iota} &
S^2 \ar[d]^{i} \\
S^1\ar@{=}[d] \ar[r]^{\jmath} &
X \ar[d]^{\mathfrak{p}} \ar[r]^{\psi} &
M \ar[d]^{p}\\
S^1 \ar[r]^>>>>{j} &
Y \ar[r]^<<<<{\pi} &
N,
}
\end{gathered}
\end{equation}
which defines the closed $7$-manifold $X$ with bundle projections $\psi$ and $\mathfrak{p}$ onto $M$ and $Y$ respectively.
We show that the induced bundle over $Y$ in Diagram (\ref{keydiag}) is trivial in this case.
\begin{proposition}\label{pdtbundleprop}
If $\langle \beta^2, [N]\rangle$ is odd, then the bundle $\pi^\ast(\xi)$ defined in (\ref{keydiag}) is trivial, and in particular
\[
X\cong S^2\times Y.
\]
\end{proposition}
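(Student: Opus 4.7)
The plan is to apply Lemma~\ref{pdtbundlelemma} to the circle bundle projection $\pi:Y\to N$, taking $W=Y$, $m=5$, and $W_{m-1}=\bigvee_{d-1}(S^{2}\vee S^{3})$ the $3$-skeleton (equivalently the $4$-skeleton, since $Y$ has no $4$-cells). The factorization hypothesis $\pi\circ\varrho\simeq\rho\circ\pi_{\prime}$ required by that lemma is exactly the commutativity of the left-hand square of Diagram~(\ref{piprimediag}); it holds by cellular approximation, since $\pi$ carries the $3$-skeleton of $Y$ into the $2$-skeleton $\bigvee_{d}S^{2}$ of $N$. With this setup, proving Proposition~\ref{pdtbundleprop} reduces to verifying the two null-homotopy conditions $f^{\prime}\circ q\circ\pi\simeq\ast$ and $\alpha\circ\pi\simeq\ast$.

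For the first condition I would use the right-hand square of Diagram~(\ref{piprimediag}), which gives $q\circ\pi\simeq\pi^{\prime}\circ\mathfrak{q}$. The hypothesis that $\langle\beta^{2},[N]\rangle$ is odd is precisely what triggers Lemma~\ref{nulllemma}, yielding $\pi^{\prime}\simeq\ast$, whence $f^{\prime}\circ q\circ\pi\simeq\ast$ at once.

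For the second condition I would split according to whether $\xi$ is Spin. In the non-Spin case the convention fixed just before this subsection forces $\beta=\alpha$, so $\alpha\circ\pi=\beta\circ\pi$; but $\pi:Y\to N$ is the principal circle bundle classified by $\beta$, and the composite $\beta\circ\pi$ classifies its tautological pullback $\pi^{\ast}Y\to Y$, which is trivial because it admits a canonical diagonal section. Hence $\alpha\circ\pi\simeq\ast$. In the Spin case, $\omega_{2}(\xi)=0$, so $\rho^{\ast}(f)=0$, and inspecting the construction of $(f^{\prime},\alpha)$ inside the proof of Lemma~\ref{Nbundlelemma}, the element $x\in[\bigvee_{d}S^{2},BS^{1}]$ may be chosen to be $0$, yielding $\alpha=0$ and thus $\alpha\circ\pi\simeq\ast$ trivially. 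In either subcase both hypotheses of Lemma~\ref{pdtbundlelemma} are met, and the conclusion that $\pi^{\ast}(\xi)$ is trivial, equivalently $X\cong S^{2}\times Y$, follows immediately.

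The main obstacle I anticipate is really the bookkeeping in the Spin subcase: one must use the ambiguity in the choice of $\alpha$ built into Lemma~\ref{Nbundlelemma} to force $\alpha=0$ when $\omega_{2}(\xi)=0$, and recognise that the convention on $\beta$ was set up with exactly this flexibility in mind. The remaining ingredients---cellular approximation for the skeletal factorization, and the triviality of the tautological pullback of a principal bundle---are essentially formal once Lemma~\ref{nulllemma} is in hand.
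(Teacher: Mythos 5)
Your proposal is correct and follows essentially the same route as the paper: Lemma~\ref{nulllemma} kills $\pi^{\prime}$ and hence $f^{\prime}\circ q\circ\pi$ via the right square of Diagram~(\ref{piprimediag}), the non-Spin case uses that $\alpha\circ\pi=\beta\circ\pi$ is null homotopic as two consecutive maps in the fibration sequence $Y\to N\to BS^{1}$, and the Spin case reduces to $\alpha=0$ so that $f$ lies in the image of $q^{\ast}$. The only cosmetic difference is that the paper concludes the Spin case directly from $f\circ\pi\simeq f^{\prime}\circ q\circ\pi\simeq\ast$ rather than funnelling both cases through Lemma~\ref{pdtbundlelemma}, which is equivalent to your bookkeeping with $\alpha=0$.
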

\begin{proof}
By Lemma \ref{nulllemma}, $\pi^\prime$ is null homotopic.
It implies that $f^\prime \circ q \circ \pi\simeq f^\prime\circ \pi^\prime\circ \mathfrak{q}$ is null homotopic by the homotopy commutativity of the right square in Diagram (\ref{piprimediag}).

If $\xi$ is non-Spin, then $\beta=\alpha$. We obtain the homotopy fibration $Y\stackrel{\pi}{\rightarrow}N\stackrel{\alpha}{\rightarrow} BS^1$, which implies that $\alpha\circ \pi$ is null homotopic, and so is $(Bs_\ast)(\alpha\circ \pi)$. Then by Lemma \ref{pdtbundlelemma} the classifying map $f\circ \pi$ of the bundle $\pi^\ast(\xi)$ is null homotopic, and the proposition follows in this case.

If $\xi$ is Spin, by Lemma \ref{Nbundlelemma} the classifying map $f: N\rightarrow BSO(3)$ of $\xi$ is in the image of $q^\ast$, that is, there exists a map $f^\prime: S^4\rightarrow BSO(3)$ such that $f^\prime\circ q\simeq f$, and then the bundle $\xi$ is the pullback of the bundle $\xi^\prime$ over $S^4$ classified by $f^\prime$. 
Hence $f\circ \pi \simeq f^\prime\circ q \circ \pi$ is null homotopic by the previous argument, and then bundle $\pi^\ast(\xi)$ is trivial. In particular, $X\cong S^2\times Y$ and the proposition follows in this case.

Combining the above two cases, the proposition is proved.
\end{proof}

We can now prove Theorem \ref{decomthm} in the case when $\langle \beta^2, [N]\rangle$ is odd.

\begin{proof}[Proof of Theorem \ref{decomthm} in Case I]
As in the beginning of this subsection, consider the circle bundle $S^1\stackrel{j}{\rightarrow} Y\stackrel{\pi}{\rightarrow} N$ classified by the primitive element $\alpha \in H^2(N;\mathbb{Z})$.
Then by Proposition \ref{pdtbundleprop}, the total space $X$ of the sphere bundle of $\pi^\ast(\xi)$ satisfies $X\cong S^2\times Y$. Hence by Diagram (\ref{keydiag}), we have
\begin{equation}\label{Mdecgeneq}
\Omega M\simeq S^1\times \Omega X\simeq S^1\times \Omega S^2 \times \Omega Y.
\end{equation}
If $d=1$, then $Y$ has to be $S^5$, and hence $\Omega M\simeq S^1\times \Omega S^2\times \Omega S^5$. If $d\geq 2$, by \cite[Example 4.4]{BT2} or \cite{Bas} 
there is a homotopy equivalence
\begin{equation}\label{Ydec1eq}
 \Omega Y\simeq \Omega (S^2\times S^3)\times  \Omega\big(J\vee(J\wedge\Omega (S^2\times S^3))\big)
\end{equation}
with $J=\mathop{\bigvee}\limits_{i=1}^{d-2}(S^2\vee S^3)$. 
Combining (\ref{Mdecgeneq}) with (\ref{Ydec1eq}), we obtain the loop decomposition of $M$ in the theorem. This completes the proof of the theorem when $\langle \beta^2, [N]\rangle$ is odd.
\end{proof}
\subsection{Case II: $\langle \beta^2, [N]\rangle$ is even}
\label{subsec: even}
In this case, the induced bundle $\pi^\ast(\xi)$ defined in (\ref{keydiag}) may not be trivial, and we need to apply a different method to prove Theorem \ref{decomthm}. Indeed, in this case we can work with the sphere bundle $S^2\stackrel{i}{\rightarrow} M\stackrel{p}{\rightarrow}N$ directly, and show that it splits after looping.

\begin{proposition}\label{loopS2MNsplitprop}
If $\langle \beta^2, [N]\rangle$ is even, the sphere bundle $S^2\stackrel{i}{\rightarrow} M\stackrel{p}{\rightarrow}N$ of $\xi$ defined in (\ref{Mdefeq}) is homotopically trivial after looping, and in particular
\[
\Omega M\simeq \Omega S^2\times \Omega N.
\]
\end{proposition}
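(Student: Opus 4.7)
The plan is to construct a section $s \colon N \to M$ of the sphere bundle $p$ under the Case~II hypothesis, and then assemble the desired splitting from $s$ together with the fiber inclusion $i \colon S^2 \to M$, using the H-space multiplication on $\Omega M$. Concretely, if such a section exists, the map
\[
\mu \colon \Omega S^2 \times \Omega N \xrightarrow{(\Omega i,\, \Omega s)} \Omega M \times \Omega M \xrightarrow{\;\cdot\;} \Omega M
\]
(with $\cdot$ the loop multiplication) will split the long exact sequence of the fibration $S^2 \to M \to N$ into short exact sequences $0 \to \pi_k(S^2) \to \pi_k(M) \to \pi_k(N) \to 0$ for $k \geq 1$, so $\mu$ induces isomorphisms on all $\pi_k$ and is thus a weak homotopy equivalence by Whitehead's theorem.

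To produce the section, I appeal to obstruction theory for lifting $f \colon N \to BSO(3)$ through $BSO(2) \to BSO(3)$, which is equivalent to a reduction of the structure group of $\xi$ from $SO(3)$ to $SO(2)$ and hence to the existence of a section of $p$. The primary obstruction lies in $H^3(N;\pi_2(S^2)) = H^3(N;\mathbb{Z})$, which vanishes by Poincar\'e duality on a simply connected closed $4$-manifold. The unique remaining obstruction lies in $H^4(N;\pi_3(S^2)) \cong \mathbb{Z}$ and, by the classification of rank $3$ bundles given in Proposition \ref{classbundleNprop} together with the analogous classification of rank $2$ oriented bundles by $c_1$, it vanishes if and only if there is $x \in H^2(N;\mathbb{Z})$ with $x \equiv \omega_2(\xi) \pmod 2$ and $x^2 = p_1(\xi)$. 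Writing $x = \alpha + 2y$ and substituting Equation~(\ref{p1xiieq}), namely $p_1(\xi) = q^*(p_1(f')) + \alpha^2$, this condition reduces to the Diophantine equation
\[
Q(y, \alpha + y) = k
\]
in $\mathbb{Z}$, where $Q$ is the intersection form of $N$ and $p_1(f') = 4k$. Any solution $y$ produces the rank $2$ bundle $\xi_0$ with $c_1(\xi_0) = x$ and the splitting $\xi \cong \xi_0 \oplus \underline{\mathbb{R}}$, from which the section $s$ is read off.

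The hardest step will be establishing solvability of $Q(y,\alpha+y) = k$ from the Case~II hypothesis that $\langle \beta^2, [N]\rangle$ is even. When $\xi$ is non-Spin, so that $\beta = \alpha$ is primitive, the unimodularity of $Q$ provides a class $y_0$ with $Q(y_0,\alpha) = 1$; writing $y = y_0 + z$ with $z \in \alpha^\perp$ the equation becomes $1 + Q(y_0,y_0) + 2 Q(y_0,z) + Q(z,z) = k$, and the evenness of $Q(\alpha,\alpha)$ together with the flexibility to adjust $z$ within $\alpha^\perp$ (and to replace $y_0$ by integer multiples) should allow a solution by parity and unimodularity arguments. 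When $\xi$ is Spin, $\alpha$ can be taken to be $0$ and the equation reduces to $Q(y,y) = k$, which demands a more careful analysis, particularly when $N$ is also Spin so that $Q$ is even; in that sub-case, the argument may need to be supplemented by considerations specific to the pullback structure $\xi = q^*(f')$, for instance by exploiting that $q \circ \iota \colon \bigvee_d S^2 \hookrightarrow N \to S^4$ is null-homotopic so that the connecting map $c \colon \Omega N \to S^2$ of the fibration can be shown to factor through $SO(2) \subset SO(3)$ and hence to be null, yielding the splitting directly on the loop level without a geometric section.
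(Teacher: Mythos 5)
Your reduction of the problem to finding a genuine section of $p$ is where the argument breaks: such a section does not exist in general under the Case~II hypothesis, so the Diophantine equation $Q(y,\alpha+y)=k$ that you correctly identify as the crux is simply not always solvable. Concretely, take $N=S^2\times S^2$ (so $Q$ is the hyperbolic form, every primitive $\beta$ has $\langle\beta^2,[N]\rangle$ even, and you are forced into Case~II) and let $\xi=q^\ast(\xi^\prime)$ where $\xi^\prime$ generates $[S^4,BSO(3)]\cong\mathbb{Z}$, so that $\alpha=0$ and $k=1$. Your equation becomes $Q(y,y)=1$, which has no solution because $Q$ is even; equivalently, there is no $x\in H^2(N;\mathbb{Z})$ with $x\equiv\omega_2(\xi)\bmod 2$ and $x^2=p_1(\xi)$, so by Proposition~\ref{classbundleNprop} the bundle $\xi$ does not split off a trivial line bundle and $p$ admits no section (a homotopy section of a fibration can be deformed to a strict one, so weakening to a homotopy section does not help). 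The proposition nevertheless holds for this $M$ --- this is exactly the phenomenon noted after Theorem~\ref{decomthm}, that the fibre bundle (\ref{Mdefeq}) does not split in general but its loop does --- so no refinement of the parity and unimodularity analysis can rescue the main line of your argument. The purely formal part of your proposal is fine: \emph{if} a right homotopy inverse of $\Omega p$ existed, your multiplication map $\mu$ would indeed give the equivalence.

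Your closing fallback --- showing the connecting map $\Omega N\to S^2$ is null, or that $\Omega f$ compresses into $SO(2)\subset SO(3)$ --- is the right kind of idea, since the homotopy fibre of a null homotopic map $\Omega N\to S^2$ is $\Omega N\times\Omega S^2$; but as written it is only a hope, and the proposed mechanism (null homotopy of $\bigvee_d S^2\to N\to S^4$) does not obviously produce the compression. The paper's actual proof avoids sections entirely and splits on the other side of the fibration: using that $\langle\beta^2,[N]\rangle$ is even, Poincar\'{e} duality provides $\alpha$ with $\langle\alpha\cup\beta,[N]\rangle=1$ and $\alpha\neq\beta$, and by \cite{BT1} there is a Poincar\'{e} duality space $Q$ with $H^\ast(Q;\mathbb{Z})\cong H^\ast(S^2\times S^2;\mathbb{Z})$ together with a map $h\colon N\to Q$ such that $\Omega h$ has a right homotopy inverse and $h^\ast(x)=\alpha$. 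One then transports $\xi$ to a bundle $\zeta$ over $Q$ with $\xi=h^\ast(\zeta)$, splits the looped sphere bundle $\Omega\widetilde{Q}\to\Omega Q$ by extending a map of tori via the universal property of the James construction (possible because everything in sight is generated in degree $2$), obtains a left homotopy inverse of $\Omega\widetilde{i}$, and pulls that retraction back along $\Omega\widetilde{h}$ to get a left homotopy inverse of $\Omega i$. Some such detour through a space where the splitting can be produced by hand is what your proposal is missing.
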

\begin{proof}
First by Poincar\'{e} duality there exists a class $\alpha\in H^2(N;\mathbb{Z})$ such that $\langle \alpha\cup \beta, [N]\rangle =1$. Since by assumption $\langle \beta^2, [N]\rangle$ is even, $\alpha\neq \beta$. Hence by \cite[Proof of proposition 3.2, Lemma 3.3]{BT1} there exists a Poincar\'{e} duality space $Q$ such that $H^\ast(Q;\mathbb{Z})\cong H^\ast(S^2\times S^2;\mathbb{Z})$ as graded rings, $\Omega Q\simeq \Omega S^2\times \Omega S^2$, and there is a map
\[
h: N\stackrel{}{\longrightarrow} Q,
\]
such that $\Omega h$ has a right homotopy inverse and $h^\ast(x)=\alpha$ with $x\in H^2(Q;\mathbb{Z})$ a generator. Let us fix a homotopy equivalence $e: \Omega S^2\times \Omega S^2\rightarrow \Omega Q$ defined in \cite[Lemma 2.3]{BT1} with its inverse denoted by $e^{-1}$.

Recall $\xi$ is determined by a pair of maps $(f^\prime, \alpha)\in [S^4, BSO(3)]\times [N, BS^1]$. 
By Lemma \ref{Nbundlelemma}, define a rank $3$ vector bundle $\zeta$ over $Q$ by $(f^\prime, x)\in [S^4, BSO(3)]\times [Q, BS^1]$. It follows that $\xi=h^\ast(\zeta)$ and there is a pullback of sphere bundles
\begin{equation}
\label{NHQbundeleq}
\begin{gathered}
 \xymatrix{
S^2 \ar@{=}[d] \ar[r]^{i} &
M \ar[r]^{p} \ar[d]^{\widetilde{h}}&
N\ar[d]^{h}\\
S^2 \ar[r]^{\widetilde{i}} &
\widetilde{Q} \ar[r]^{\widetilde{p}} &
Q ,
}
\end{gathered}
\end{equation}
where the second row is the sphere bundle of $\zeta$ and $\widetilde{h}$ is the induced map.
Since $H^\ast(Q;\mathbb{Z})$ and $H^\ast(S^2;\mathbb{Z})$ are concentrated in even degrees, the Serre spectral sequence for the fibration $S^2\rightarrow \widetilde{Q}\rightarrow Q$ collapses for degree reasons, and then $H^\ast(\widetilde{Q};\mathbb{Z})\cong H^\ast(S^2;\mathbb{Z})\otimes H^\ast(Q;\mathbb{Z})$.
Apply the loop functor to Diagram (\ref{NHQbundeleq}). It is clear that there is a map $i_1\times i_2: S^1\times S^1\rightarrow \Omega\widetilde{Q}$ such that the composition
\[
S^1\times S^1\stackrel{i_1\times i_2}{\longrightarrow} \Omega \widetilde{Q}\stackrel{\Omega \widetilde{p}}{\longrightarrow}\Omega Q \stackrel{e^{-1}}{\longrightarrow} \Omega S^2\times \Omega S^2
\]
is homotopic to $E\times E$ with $E: S^1\rightarrow \Omega S^2$ the suspension map. By the universal property of $\Omega \Sigma$, there is a unique extension $I: \Omega S^2\times \Omega S^2\rightarrow \Omega\widetilde{Q}$ of $i_1\times i_2$ up to homotopy such that 
\[
\Omega S^2\times \Omega S^2\stackrel{I}{\longrightarrow} \Omega \widetilde{Q}\stackrel{\Omega \widetilde{p}}{\longrightarrow}\Omega Q \stackrel{e^{-1}}{\longrightarrow} \Omega S^2\times \Omega S^2
\]
is homotopic to identity. Therefore, the sphere bundle of $\zeta$ splits after looping to give
\[
\Omega \widetilde{Q}\simeq \Omega S^2\times \Omega Q\simeq  \Omega S^2\times\Omega S^2\times \Omega S^2.
\]
In particular, $\Omega \widetilde{i}$ has a left homotopy inverse $\widetilde{r}$, which implies that $\widetilde{r}\circ \Omega\widetilde{h}$ is a left homotopy inverse of $\Omega i$. 
Then the sphere bundle in the top row of Diagram (\ref{NHQbundeleq}) splits after looping, and in particular $\Omega M\simeq \Omega S^2\times \Omega N$. This proves the proposition.
\end{proof}
We can now prove Theorem \ref{decomthm} in the case when $\langle \beta^2, [N]\rangle$ is even.

\begin{proof}[Proof of Theorem \ref{decomthm} in Case II]
Since $\langle \beta^2, [N]\rangle$ is even and $\beta$ is primitive, we have $d\geq 2$.
By Proposition \ref{loopS2MNsplitprop}, $\Omega M\simeq \Omega S^2\times \Omega N$. Further by \cite[Theorem 1.3]{BT1} there is a homotopy equivalence
\[
 \Omega N\simeq S^1\times \Omega (S^2\times S^3)\times  \Omega\big(J\vee(J\wedge\Omega (S^2\times S^3))\big)
\]
with $J=\mathop{\bigvee}\limits_{i=1}^{d-2}(S^2\vee S^3)$. Then in this case the theorem follows by combining the above decompositions.
\end{proof}
\section{The case when $d=0$} 
\label{sec: d=0} 
In this section, we study the case when $d=0$ and prove Theorem \ref{d=0thm} as an immediate corollary of Proposition \ref{Md=0koddprop} and Proposition \ref{Md=0kevenprop}. Indeed, we work in a slightly more general context, that is, to study the loop decomposition of the closed $6$-manifold $M$ with cell structure of the form 
\begin{equation}\label{Md=0eq}
M\simeq S^2\cup e^4\cup e^6.
\end{equation}
Notice that $M$ in Theorem \ref{d=0thm} as the total space of a $S^2$-bundle over $S^4$ is an example of (\ref{Md=0eq}). 
Yamaguchi \cite{Yam} almost determined the homotopy classification of $M$ in (\ref{Md=0eq}) with correction by \cite{MR} and \cite{Bau}, and summarized the criterion whether $M$ has the same homotopy type as a $S^2$-bundle over $S^4$ in \cite[Remark 4.8]{Yam} based on \cite{Sa}.

By (\ref{Md=0eq}) there are generators $x\in H^2(M;\mathbb{Z})$, $y\in H^4(M;\mathbb{Z})$ such that
\begin{equation}\label{xyreleq}
x^2=ky
\end{equation}
for some $k\in \mathbb{Z}$.
Consider the $S^1$-bundle
\begin{equation}\label{Xd=0def}
S^1\stackrel{j}{\longrightarrow} X\stackrel{}{\longrightarrow} M
\end{equation}
classified by $x\in H^2(M;\mathbb{Z})\cong [M, BS^1]$ which defines the closed $7$-manifold $X$. 
Denote $P^n(k)$ be the Moore space such that the reduced cohomology $\widetilde{H}^\ast(P^n(k);\mathbb{Z})\cong \mathbb{Z}/k\mathbb{Z}$ if $\ast=n$ and $0$ otherwise \cite{N}.
\begin{lemma}\label{cellXlemma}
If $k\neq 0$, there is a homotopy equivalence
\[
X\simeq P^4(k)\cup e^7.
\]
\end{lemma}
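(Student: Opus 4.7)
The plan is to deduce the cell structure of $X$ by first computing $H^\ast(X;\mathbb{Z})$, then constructing a map $P^4(k) \to X$ that is an isomorphism through degree $4$, and finally identifying the cofibre with $S^7$. I would begin with the Serre spectral sequence of the circle bundle $S^1 \to X \to M$. Since $M$ is a closed orientable $6$-manifold with cell structure $S^2 \cup e^4 \cup e^6$, Poincar\'e duality forces $H^\ast(M;\mathbb{Z}) = \mathbb{Z}\langle 1, x, y, z\rangle$ with $xy = z$, and combined with $x^2 = ky$ this pins down the ring. The transgression $d_2$ sends the fibre generator $u \in H^1(S^1)$ to the Euler class $x$, so by the Leibniz rule $d_2(xu) = ky$, $d_2(yu) = z$, and $d_2(zu) = 0$. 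Reading off $E_\infty = E_3$ yields
\[
H^\ast(X;\mathbb{Z}) \cong \mathbb{Z},\ 0,\ 0,\ 0,\ \mathbb{Z}/k\mathbb{Z},\ 0,\ 0,\ \mathbb{Z}
\]
in degrees $0$ through $7$. I would also verify via the homotopy long exact sequence of the fibration that $X$ is $2$-connected: the boundary $\pi_2(M) \to \pi_1(S^1)$ is evaluation of the Euler class $x$, which is a generator of $H^2(M;\mathbb{Z})$, hence an isomorphism.

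Next I would construct $f : P^4(k) \to X$. By Hurewicz and the universal coefficient theorem, $\pi_3(X) \cong H_3(X;\mathbb{Z}) \cong \mathbb{Z}/k\mathbb{Z}$. Choosing a generator $\alpha : S^3 \to X$, the relation $k\alpha \simeq \ast$ lets me extend $\alpha$ over the $4$-cell to produce $f : P^4(k) = S^3 \cup_k e^4 \to X$. By construction $f_\ast$ is an isomorphism on $\pi_3 = H_3$, and naturality of UCT then forces $f^\ast$ to be an isomorphism on $H^4$. Feeding this into the cohomology long exact sequence of the cofibration shows $\widetilde H^\ast(C_f) = 0$ in degrees $\leq 6$ and $\widetilde H^7(C_f) \cong \mathbb{Z}$. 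Since $f$ is a pointed map between simply connected spaces, $C_f$ is simply connected, and Hurewicz upgrades this to $6$-connectivity with $\pi_7(C_f) \cong \mathbb{Z}$; a generator $S^7 \to C_f$ is therefore a weak, hence a homotopy, equivalence by Whitehead's theorem.

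Finally, replacing $f$ by the inclusion into its mapping cylinder yields a CW pair $(M_f, P^4(k))$ with $M_f \simeq X$ and quotient $M_f/P^4(k) \simeq S^7$. The standard principle that a cofibration of CW complexes with cofibre $\simeq S^n$ is, up to homotopy equivalence, the attachment of a single $n$-cell then gives $X \simeq P^4(k) \cup_\phi e^7$ for some $\phi : S^6 \to P^4(k)$. The main obstacle is ensuring $f^\ast$ really is an isomorphism on $H^4$; this requires $\alpha$ to be a genuine generator of $\pi_3(X)$ so that $f_\ast$ sends generator to generator on $H_3$, after which UCT naturality does the rest. The hypothesis $k \neq 0$ is used crucially in the Serre computation to guarantee $H^3(X) = 0$ via $\ker(E^{2,1}_2 \to E^{4,0}_2) = 0$, and hence the correct connectivity of $X$.
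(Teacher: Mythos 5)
Your argument is correct, but it takes a genuinely different route from the paper --- in fact it is precisely the ``direct'' spectral-sequence proof that the paper mentions in its first sentence and then declines to carry out. The paper instead quotes a result of Jiang that $X$, being a $2$-connected $7$-manifold with a free circle action over $M$, is homotopy equivalent to the total space of an $S^3$-bundle over $S^4$; the classification of such bundles (Sasao, Crowley--Escher) then hands over the cell structure $P^4(k')\cup e^7$ for free, and $k'=k$ is pinned down by $\pi_3(X)\cong\pi_3(M)\cong\mathbb{Z}/k$. Your proof is longer but entirely self-contained: the Serre spectral sequence of $S^1\to X\to M$ (with $d_2(u)=x$, $d_2(xu)=ky$, $d_2(yu)=z$) gives $\widetilde H^\ast(X)\cong\mathbb{Z}/k$ in degree $4$ and $\mathbb{Z}$ in degree $7$, the generator of $\pi_3(X)\cong\mathbb{Z}/k$ extends to $P^4(k)\to X$, and the cofibre is recognized as $S^7$. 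Two small points worth making explicit: the final ``standard principle'' that a cofibration with cofibre $S^7$ is the attachment of a single $7$-cell rests on the relative Hurewicz theorem applied to the pair $(X,P^4(k))$, so it does use that both spaces are simply connected (which you have); and since the splitting in the universal coefficient theorem is not natural, the claim that $f^\ast$ is an isomorphism on $H^4$ should be justified by noting that $\operatorname{Hom}(H_4(-),\mathbb{Z})$ vanishes on both sides, so that $H^4$ is naturally identified with $\operatorname{Ext}(H_3(-),\mathbb{Z})$ --- which is how you in effect use it. What the paper's route buys is brevity and a conceptual explanation (the circle action is regular, so $X$ really is a sphere bundle over $S^4$); what yours buys is independence from the geometric-topology input and an explicit computation of $H^\ast(X;\mathbb{Z})$ along the way.
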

\begin{proof}
The lemma can be proved directly by analyzing the Serre spectral sequence of the fibration $X\rightarrow M\stackrel{x}{\rightarrow} BS^1$ induced from (\ref{Xd=0def}). Here we provide an alternative proof using results in geometric topology. By \cite[Theorem 1.3]{Jiang}, $X$ is homotopy equivalent to the total space of a $S^3$-bundle over $S^4$. Then by the homotopy classification of $S^3$-bundles over $S^4$ \cite{Sa, CE}, $X$ is homotopy equivalent to $P^4(k^\prime)\cup e^7$ for some $k^\prime\in \mathbb{Z}$. Notice that $\pi_3(X)\cong \pi_3(M)\cong \pi_3(S^2\cup_{k\eta_2}e^4)\cong \mathbb{Z}/k$, where $\eta_2\in\pi_3(S^2)$ is the Hopf element. Then $k=k^\prime$ because $\pi_3(P^4(k^\prime)\cup e^7)\cong \mathbb{Z}/k^\prime$, and the lemma follows.
\end{proof}

Lemma \ref{cellXlemma} has an immediate consequence on the rational homotopy of $M$.\begin{lemma}\label{d=0rationallemma}
Let $M$ be a closed $6$-manifold with cell structure of the form (\ref{Md=0eq}).
Then if $k\neq 0$ there is a rational homotopy equivalence $M\simeq_{\mathbb{Q}}\mathbb{C}P^3$, and if $k=0$ $M\simeq_{\mathbb{Q}} S^2\times S^4$.
\end{lemma}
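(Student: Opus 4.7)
The plan is to exhibit, in each case, an explicit rational equivalence realizing the stated model.

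For $k\neq 0$ the idea is cellular approximation. The class $x\in H^2(M;\mathbb{Z})$ is classified by a map $\widetilde{x}:M\to K(\mathbb{Z},2)=\mathbb{C}P^\infty$. Since $\dim M=6$ and $\mathbb{C}P^3$ is the $6$-skeleton of $\mathbb{C}P^\infty$, cellular approximation homotopes $\widetilde{x}$ into $\mathbb{C}P^3$, producing $\phi:M\to\mathbb{C}P^3$ with $\phi^\ast(\alpha)=x$, where $\alpha$ denotes the degree-$2$ generator of $H^\ast(\mathbb{C}P^3;\mathbb{Z})$. Using the relation (\ref{xyreleq}) and the fact that Poincar\'{e} duality forces $xy=\pm z$ (the pairing $H^2(M)\otimes H^4(M)\to H^6(M)$ is unimodular between free cyclic groups), one computes $\phi^\ast(\alpha^2)=ky$ and $\phi^\ast(\alpha^3)=\pm kz$. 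For $k\neq 0$ these are rational isomorphisms on $H^4$ and $H^6$, so $\phi$ is a rational equivalence between simply connected spaces.

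For $k=0$ the plan is to exploit the cell structure of $M$. Since $x^2=0$, the attaching map of the $4$-cell has Hopf invariant zero and is null-homotopic, so the $5$-skeleton of $M$ is $S^2\vee S^4$ and $M\simeq(S^2\vee S^4)\cup_g e^6$ for some $g\in\pi_5(S^2\vee S^4)$. The rational Hilton--Milnor theorem yields $\pi_5(S^2\vee S^4)\otimes\mathbb{Q}\cong\mathbb{Q}\{[\iota_2,\iota_4]\}$ (the unique basic product of suspension-degree at most $5$ in the free graded Lie algebra on generators of degrees $1$ and $3$), so $g$ is rationally $\lambda[\iota_2,\iota_4]$ for some $\lambda\in\mathbb{Q}$. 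The standard relation between Whitehead attachments and cup products identifies $xy=\lambda z$ in $H^6(M)$, and Poincar\'{e} duality forces this to be $\pm z$; hence $\lambda\neq 0$ and $M\simeq_{\mathbb{Q}}(S^2\vee S^4)\cup_{[\iota_2,\iota_4]}e^6=S^2\times S^4$.

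The main obstacle I expect is in Case $k=0$: carefully justifying the identification of the scalar $\lambda$ via the Whitehead-product/cup-product correspondence. A slicker alternative that bypasses this is to construct a direct quasi-isomorphism from the minimal Sullivan model $(\bigwedge(a,b,c,e),d)$ of $S^2\times S^4$---with $|a|=2$, $|b|=3$, $|c|=4$, $|e|=7$ and $db=a^2$, $de=c^2$---into $A_{PL}(M)$, by sending $a,c$ to cocycle representatives of $x,y$ and using $x^2=0$ together with $y^2\in H^8(M;\mathbb{Q})=0$ to extend to $b,e$; the induced cohomology map is immediately an isomorphism since $xy$ generates $H^6(M;\mathbb{Q})$ by Poincar\'{e} duality.
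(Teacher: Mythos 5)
Your argument is correct, but it takes a genuinely different route from the paper's. The paper's proof is a short appeal to formality: by Poincar\'{e} duality the graded algebra $H^\ast(M;\mathbb{Q})$ is determined by $k$ and is isomorphic to $H^\ast(\mathbb{C}P^3;\mathbb{Q})$ when $k\neq 0$ (rescale the degree-$4$ generator) and to $H^\ast(S^2\times S^4;\mathbb{Q})$ when $k=0$; since every simply connected $6$-manifold is formal by \cite[Proposition 4.6]{NM}, the rational homotopy type is determined by this algebra and the lemma follows. You instead construct explicit rational equivalences: for $k\neq 0$ the classifying map of $x$ compressed into $\mathbb{C}P^3$, which induces a rational cohomology isomorphism precisely because $k\neq 0$ and the cup-product pairing $H^2(M)\otimes H^4(M)\rightarrow H^6(M)$ is unimodular; for $k=0$ an identification of $M$, rationally, with the cofibre of a nonzero multiple of $[\iota_2,\iota_4]$ --- your computation $\pi_5(S^2\vee S^4)\otimes\mathbb{Q}\cong\mathbb{Q}\{[\iota_2,\iota_4]\}$ is right (the only other candidate brackets in the free graded Lie algebra on generators of degrees $1$ and $3$ vanish), the Whitehead-product/cup-product correspondence gives $\lambda\neq 0$ via Poincar\'{e} duality, and your Sullivan-model alternative cleanly disposes of the residual worry about the scalar $\lambda$, since any $\lambda\neq 0$ yields the same minimal model. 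What your approach buys is independence from the Neisendorfer--Miller formality theorem: in effect you re-prove formality of $M$ by hand in these two special cases, and you produce actual maps realizing the equivalences. What the paper's approach buys is brevity and uniformity, handling both cases at once with no case-by-case construction.
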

\begin{proof}
Let $x^2=ky$ for some $k\in \mathbb{Q}$, where $x$ and $y\in H^\ast(M;\mathbb{Q})$ are two generators with ${\rm deg}(x)=2$.
By Poincar\'{e} duality, it is easy to see that the cohomology algebra $H^\ast(M;\mathbb{Q})$ 
is determined by $k$, and is isomorphic to $H^\ast(\mathbb{C}P^3;\mathbb{Q})$ if $k\neq 0$, or $H^\ast(S^2\times S^4;\mathbb{Q})$ if $k=0$. 
Since every simply connected $6$-manifold is formal \cite[Proposition 4.6]{NM}, the rational homotopy type of $M$ is determined by its rational cohomology algebra $H^\ast(M;\mathbb{Q})$. Hence $M\simeq_{\mathbb{Q}}\mathbb{C}P^3$ or $S^2\times S^4$, and the lemma is proved.
\end{proof}

\subsection{The subcase when $k$ is odd} 
\label{subsec: kodd} 
When $k$ is odd, the loop decomposition of the Poincar\'{e} complex $P^4(k)\cup e^7$ was determined by Huang and Theriault \cite{HT}. For any prime $p$, let $S^{m}\{p^r\}$ be the homotopy fibre of the degree $p^r$ map on $S^{m}$.
Let $k=p_1^{r_1}\cdots p_\ell^{r_\ell}$ be the prime decomposition of $k$. By \cite[Theorem 1.1]{HT}, when $k$ is odd there is a homotopy equivalence 
\begin{equation}\label{HTthmeq}
\Omega(P^4(k)\cup e^7)\simeq \prod_{j=1}^{\ell} S^3\{p_j^{r_j}\}\times \Omega S^7.
\end{equation}

\begin{proposition}\label{Md=0koddprop}
Let $M$ be a closed $6$-manifold with cell structure of the form $S^2\cup_{k\eta_2} e^4\cup e^6$. 
If $k$ is odd, then $M$ has the same homotopy type as a $S^2$-bundle over $S^4$, and there is a homotopy equivalence 
\begin{equation}\label{Md=0decomeq}
\Omega M\simeq S^1\times \prod_{j=1}^{\ell} S^3\{p_j^{r_j}\}\times \Omega S^7.
\end{equation}
\end{proposition}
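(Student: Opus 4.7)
The plan is to leverage the $S^1$-bundle (\ref{Xd=0def}) together with Lemma \ref{cellXlemma}, which identifies $X\simeq P^4(k)\cup e^7$, and then to invoke the Huang--Theriault decomposition (\ref{HTthmeq}) of $\Omega(P^4(k)\cup e^7)$ for odd $k$. The entire loop decomposition then reduces to splitting the looped circle bundle over $M$.

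First I would extend the principal $S^1$-bundle $S^1\to X\to M$ to the homotopy fibration
\[
X\longrightarrow M\stackrel{x}{\longrightarrow}\mathbb{C}P^\infty,
\]
where $x\in H^2(M;\mathbb{Z})\cong [M,BS^1]$ is the classifying map and a generator of $H^2(M;\mathbb{Z})$. Looping once yields
\[
\Omega X\longrightarrow \Omega M\stackrel{\Omega x}{\longrightarrow}S^1.
\]
To construct a section of $\Omega x$, observe that the bottom cell inclusion $\iota\colon S^2\hookrightarrow M$ pulls $x$ back to a generator of $H^2(S^2;\mathbb{Z})$, so $x\circ\iota$ is homotopic to the standard inclusion $\mathbb{C}P^1\hookrightarrow\mathbb{C}P^\infty$. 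Precomposing the loop of $\iota$ with the suspension $E\colon S^1\to\Omega S^2$, the composite
\[
S^1\stackrel{E}{\longrightarrow}\Omega S^2\stackrel{\Omega\iota}{\longrightarrow}\Omega M\stackrel{\Omega x}{\longrightarrow}S^1
\]
induces the identity on $\pi_1=\mathbb{Z}$ and hence is a self-homotopy equivalence of $S^1$. This gives a homotopy section $\sigma\colon S^1\to\Omega M$ of $\Omega x$; using the $H$-group structure on $\Omega M$, the map $\Omega X\times S^1\to\Omega M$ defined by $(\alpha,\theta)\mapsto\alpha\cdot\sigma(\theta)$ is a weak equivalence by the long exact sequence of the fibration. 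Combining $\Omega M\simeq S^1\times\Omega X$ with Lemma \ref{cellXlemma} and (\ref{HTthmeq}) produces the stated product decomposition.

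For the remaining claim that $M$ is homotopy equivalent to an $S^2$-bundle over $S^4$, I would invoke Yamaguchi's homotopy classification \cite[Remark 4.8]{Yam} (with the corrections in \cite{MR, Bau}) together with the bundle-realization criterion of \cite{Sa}: when $k$ is odd, the additional invariant beyond $k$ that distinguishes homotopy types in (\ref{Md=0eq}) always matches one realized by some rank $3$ bundle over $S^4$. This bundle-realization step is the main obstacle in the proof, since it rests on the precise form of Yamaguchi's invariants rather than on a homotopical decomposition argument, but no new computation is required beyond direct citation of the classification and its corrections.
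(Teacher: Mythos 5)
Your treatment of the loop space decomposition is correct and coincides with the paper's own argument: both pass to the $S^1$-bundle (\ref{Xd=0def}), identify $X\simeq P^4(k)\cup e^7$ via Lemma \ref{cellXlemma}, apply the Huang--Theriault equivalence (\ref{HTthmeq}), and split off the $S^1$ factor. Your explicit section $S^1\stackrel{E}{\longrightarrow}\Omega S^2\stackrel{\Omega\iota}{\longrightarrow}\Omega M$ of $\Omega x$ merely spells out a step the paper leaves implicit, and it works because $\iota^\ast(x)$ generates $H^2(S^2;\mathbb{Z})$.

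The gap is in the bundle-realization claim. You assert that ``the additional invariant beyond $k$ that distinguishes homotopy types \ldots always matches one realized by some rank $3$ bundle over $S^4$,'' but that is precisely the statement to be proved, and you supply neither the invariant computation nor an actual bundle. The premise is also misstated for odd $k$: by \cite[Corollary 4.6]{Yam} there is \emph{no} additional invariant in this case --- the homotopy type of $S^2\cup_{k\eta_2}e^4\cup e^6$ is determined by $k$ alone; the delicate extra invariants arise only when $k$ is even and $M$ is Spin. What is still needed is a concrete $S^2$-bundle over $S^4$ whose total space satisfies $x^2=ky$. The paper produces one by pulling back the bundle $S^2\rightarrow\mathbb{C}P^3\rightarrow S^4$, classified by a generator of $\pi_4(BSO(3))\cong\mathbb{Z}$, along a degree-$k$ self-map of $S^4$; the resulting total space $M^\prime$ has $x^{\prime 2}=ky^\prime$, and \cite[Corollary 4.6]{Yam} then gives $M\simeq M^\prime$. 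With that explicit construction substituted for the appeal to \cite[Remark 4.8]{Yam} and \cite{Sa}, your proof is complete.
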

\begin{proof}
The homotopy equivalence (\ref{Md=0decomeq}) follows immediately from Lemma \ref{cellXlemma}, (\ref{Xd=0def}) and (\ref{HTthmeq}). For the first statement, recall that there is the fibre bundle \cite[Section 1.1]{HBJ}
\[
S^2\longrightarrow \mathbb{C}P^3\stackrel{}{\longrightarrow} S^4,
\]
classified by a generator of $\pi_4(BSO(3))\cong\mathbb{Z}$. Pullback this bundle along a self-map of $S^4$ of degree $k$, we obtain the $6$-manifold $M^\prime$ in the following diagram of $S^2$-bundles
\[
\xymatrix{
S^2\ar@{=}[d] \ar[r] &
M^\prime \ar[r]^{} \ar[d] &
S^4\ar[d]^{k} \\
S^2\ar[r] &
\mathbb{C}P^3\ar[r]^{}  &
S^4.
}
\]
It is easy see that $x^{\prime 2}=k y^\prime$ where $x^\prime \in H^2(M^\prime;\mathbb{Z})$ and $y^\prime\in H^4(M^\prime;\mathbb{Z})$ are two generators.
By \cite[Corollary 4.6]{Yam}, when $k$ is odd the homotopy type of $M$ is uniquely determined by $k$, and hence $M\simeq M^\prime$. This completes the proof of the proposition.
\end{proof}

\subsection{The subcase when $k$ is even} 
\label{subsec: keven} 
In \cite[Section 6]{HT}, Huang and Theriault showed that for $P^4(2^r)\cup e^7$ with $r\geq 3$, there is an homotopy equivalence 
\begin{equation}\label{HTpropeq}
\Omega(P^4(2^r)\cup e^7)\simeq S^3\{2^r\}\times \Omega S^7,
\end{equation}
provided there is a map $P^4(2^r)\cup e^7\rightarrow S^4$ inducing a surjection in mod-$2$ homology.

\begin{proposition}\label{Md=0kevenprop}
Let $M$ be a closed $6$-manifold with cell structure of the form $S^2\cup_{2^r\eta_2} e^4\cup e^6$. 
If $r\geq 3$, then there is a homotopy equivalence 
\[
\Omega M \simeq S^1\times S^3\{2^r\}\times \Omega S^7.
\]
\end{proposition}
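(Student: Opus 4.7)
The plan is to combine Lemma \ref{cellXlemma}, the hypothesis-dependent decomposition (\ref{HTpropeq}), and a splitting of the looped principal circle bundle (\ref{Xd=0def}). By Lemma \ref{cellXlemma}, $X\simeq P^4(2^r)\cup e^7$, so to invoke (\ref{HTpropeq}) I must exhibit a map $X\to S^4$ that is surjective on mod-$2$ homology. Granted such a map, (\ref{HTpropeq}) delivers $\Omega X\simeq S^3\{2^r\}\times\Omega S^7$, and then looping (\ref{Xd=0def}) completes the argument.

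The needed map is extracted from the proof of Lemma \ref{cellXlemma} itself. Jiang's theorem \cite{Jiang} identifies $X$ with the total space of an $S^3$-bundle over $S^4$, and the homotopy classification of such bundles \cite{Sa, CE}, together with the computation $\pi_3(X)\cong\mathbb{Z}/2^r$, pins the Euler class to $\pm 2^r$. Composing a chosen homotopy equivalence with the bundle projection then gives a map $\pi_X\colon X\to S^4$. Because $2^r$ is even, the mod-$2$ Serre spectral sequence of this $S^3$-bundle collapses, so the edge homomorphism $H^4(S^4;\mathbb{Z}/2)\to H^4(X;\mathbb{Z}/2)$ is an isomorphism and dually $\pi_X$ is surjective on $H_\ast(-;\mathbb{Z}/2)$. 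Applying (\ref{HTpropeq}) yields $\Omega X\simeq S^3\{2^r\}\times\Omega S^7$.

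Finally, the circle bundle (\ref{Xd=0def}) is classified by the generator $x\in H^2(M;\mathbb{Z})\cong[M,BS^1]$, so its classifying map $M\to BS^1=K(\mathbb{Z},2)$ is an isomorphism on $\pi_2$. Looping produces the fibration $\Omega X\to\Omega M\to S^1$ in which $\Omega M\to S^1=K(\mathbb{Z},1)$ is surjective on $\pi_1$ and therefore admits a section $s\colon S^1\to\Omega M$. Using the $H$-space structure of $\Omega M$, the map $\Omega X\times S^1\to\Omega M$, $(\alpha,g)\mapsto \alpha\cdot s(g)$, is a weak equivalence by the Five Lemma, so $\Omega M\simeq S^1\times\Omega X$. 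Combining the two decompositions gives the claimed homotopy equivalence.

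The main obstacle is producing the map $\pi_X\colon X\to S^4$ satisfying the mod-$2$ surjectivity required by (\ref{HTpropeq}); fortunately the geometric input of Jiang's theorem already used in Lemma \ref{cellXlemma} supplies it, and the parity of the Euler class makes the spectral sequence step essentially free. The remaining steps are formal manipulations with the principal $S^1$-bundle and its classifying map.
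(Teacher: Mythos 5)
Your argument is correct and follows essentially the same route as the paper: identify $X\simeq P^4(2^r)\cup e^7$ as the total space of an $S^3$-bundle over $S^4$ via Lemma \ref{cellXlemma}, verify the mod-$2$ homology surjectivity hypothesis needed for (\ref{HTpropeq}), and then split off the $S^1$ factor from the looped circle bundle (\ref{Xd=0def}). The only difference is that you spell out the evenness-of-the-Euler-number argument for the surjectivity and the section-plus-multiplication argument for the splitting, both of which the paper treats as immediate.
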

\begin{proof}
Recall by Lemma \ref{cellXlemma} and its proof $X\simeq P^4(2^r)\cup e^7$, and is homotopy equivalent to the total space of a $S^3$-bundle over $S^4$
\[
S^3\stackrel{}{\longrightarrow} X\stackrel{q}{\longrightarrow} S^4.
\]
It is clear that $q_\ast: H_4(X;\mathbb{Z}/2\mathbb{Z})\rightarrow H_4(S^4;\mathbb{Z}/2\mathbb{Z})$ is surjective. Hence by (\ref{HTpropeq}) $\Omega X\simeq S^3\{2^r\}\times \Omega S^7$. The lemma then follows from (\ref{Xd=0def}) immediately.
\end{proof}

\section{Coformality of $6$-manifolds}
\label{sec: rational}
In this section, we study the rational homotopy theory of $6$-manifolds as an application of our decompositions in Theorem \ref{decomthm}. We briefly recall some necessary terminology used in this section, and for the detailed knowledge of rational homotopy theory one can refer to the standard literature \cite{FHT}.

Recall a $CW$ complex $X$ is rationally {\it formal} if its rational homotopy type is determined by the graded commutative algebra $H^\ast(X;\mathbb{Q})$; and is rationally {\it coformal} if its rational homotopy type is determined by the graded Lie algebra $\pi_\ast(\Omega X)\otimes \mathbb{Q}$, which is called the {\it homotopy Lie algebra} of $X$ denoted by $L_X$. Suppose $(\Lambda V_X, d)$ is a {\it Sullivan model} of $X$. The differential $d=\sum\limits_{i\geq 0}d_i$ with $d_i: V_X\rightarrow \Lambda^{i+1} V_X$, and $(\Lambda V_X, d)$ is {\it minimal} if the linear part $d_0=0$. In the latter case, $V_X$ is dual to $\pi_\ast(\Omega X)\otimes \mathbb{Q}$. Moreover, $X$ is coformal if and only if it has a {\it purely quadratic} Sullivan model $C^\ast (L_X, 0)=(\Lambda (sL_X)^{\#}, d_1)$, where $C^\ast(-)$ is the {\it commutative cochain algebra functor}, $s$ is the suspension and $\#$ is the dual operation.
\begin{proposition}\label{Mcoformalprop}
Let $M$ be a $6$-manifold in Theorem \ref{decomthm} such that $d\geq 2$. Then $M$ is coformal.
\end{proposition}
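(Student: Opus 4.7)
My plan is to show that $M$ is Koszul in the sense of Berglund and then invoke the fact that a Koszul space is coformal. Since every simply connected closed $6$-manifold is formal (\cite[Proposition 4.6]{NM}), the task reduces to showing that the graded commutative algebra $H^\ast(M;\mathbb{Q})$ is a Koszul algebra.

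First I would identify $H^\ast(M;\mathbb{Q})$ as a quadratic algebra. Since $H^\ast(N;\mathbb{Q})$ is concentrated in even degrees, $H^3(N;\mathbb{Q})=0$, so the rational Euler class of the bundle $S^2\to M\to N$ vanishes and the Serre spectral sequence collapses rationally. Leray--Hirsch then presents $H^\ast(M;\mathbb{Q})$ by degree-$2$ generators $x_1,\ldots,x_d$ (pulled back from a basis of $H^2(N;\mathbb{Q})$) and a class $u$ restricting to the fiber's fundamental class, modulo (i) the $\binom{d+1}{2}-1$ intersection-form relations in $\mathrm{Sym}^2 H^2(N;\mathbb{Q})\to H^4(N;\mathbb{Q})$ and (ii) a single relation $u^2=\lambda t$ determined by the rational first Pontryagin class $p_1(\xi)$, where $t\in H^4(N;\mathbb{Q})$ is a generator expressed as a quadratic combination of the $x_i$. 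The hypothesis $d\geq 2$ ensures, via non-degeneracy of the intersection form, that all further relations in the polynomial algebra (namely the vanishing of $H^{\geq 6}(N)$, hence of $H^{\geq 7}(M)$) follow from these quadratic ones, so $H^\ast(M;\mathbb{Q})$ admits a purely quadratic presentation.

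Next I would verify Koszulness of this quadratic algebra. For $d\geq 2$, the algebra $H^\ast(N;\mathbb{Q})$ is a Poincar\'e duality quadratic algebra of a type recognised as Koszul in \cite[Example 5.1 and 5.4]{Ber}. Adjoining the new degree-$2$ generator $u$ with the single quadratic relation $u^2-\lambda t$ produces a Poincar\'e duality algebra on $d+1$ generators which remains Koszul; I would confirm this either by writing out the Koszul complex explicitly or by identifying the extension with a quadratic complete intersection. Combining this with formality, Berglund's definition gives that $M$ is Koszul, and every Koszul space is coformal, completing the proof.

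The main obstacle is the Koszulness verification in the last step. An alternative, more hands-on route would be to construct a purely quadratic minimal Sullivan model $(\Lambda V,d)$ of $M$ with $V^2=\mathbb{Q}\{x_i,u\}$ and $V^3$ dual to the quadratic relations, and verify $H^\ast(\Lambda V,d)\cong H^\ast(M;\mathbb{Q})$ directly, in particular the vanishing of $H^{\geq 7}$. This calculation would also explain the failure of coformality at $d=1$ (the first bullet of Theorem~\ref{coformalthm}): the cubic relation $x^3=0$ of $H^\ast(\mathbb{C}P^2)$ is not a consequence of any quadratic relation, forcing a degree-$5$ generator in the minimal model.
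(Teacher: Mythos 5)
Your overall strategy --- deduce coformality from formality plus Koszulness of $H^\ast(M;\mathbb{Q})$ via Berglund's Theorem 1.2 --- is logically sound and genuinely different from the paper's argument, but the step you yourself flag as ``the main obstacle'' is exactly where the proof is missing, and it is the hardest part. Two things are left unverified. First, that the canonical map from the quadratic algebra $\Lambda(x_1,\dots,x_d,u)/R$, with $R=\ker\bigl(\mathrm{Sym}^2H^2(M;\mathbb{Q})\to H^4(M;\mathbb{Q})\bigr)$, onto $H^\ast(M;\mathbb{Q})$ is an isomorphism; this requires showing that degree $6$ collapses to one dimension and degrees $\geq 8$ vanish as consequences of the quadratic relations, which uses $d\geq 2$ and nondegeneracy of the intersection form in an essential way. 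Second, and more seriously, a purely quadratic presentation is necessary but not sufficient for Koszulness, and neither of your two suggested verifications is carried out. The ``quadratic complete intersection'' idea looks wrong as stated: $H^\ast(M;\mathbb{Q})$ is an Artinian Poincar\'e duality algebra with $d+1$ generators in degree $2$ and $\binom{d+2}{2}-(d+1)$ quadratic relations, which for $d\geq 2$ exceeds the number of generators, so it is not a complete intersection. One would instead need an explicit resolution, a PBW/quadratic Gr\"obner basis argument, or a citation covering PD algebras of this form; Berglund's Examples 5.1 and 5.4 cover $H^\ast(N;\mathbb{Q})$ itself but not the twisted degree-$2$ extension by $u$ with $u^2=\lambda t+\sum_i\mu_ix_iu$.

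For comparison, the paper avoids the algebraic Koszulness computation entirely and runs the implication in the opposite direction. It takes the purely quadratic minimal model $C^\ast(L_N,0)$ of $N$ (coformal for $d\geq 2$ by Neisendorfer--Miller), forms a relative minimal Sullivan model of the fibration $S^2\to M\to N$, and uses the splitting of rational homotopy groups already established in Theorem \ref{decomthm} to conclude that the linear part $d_0$ vanishes, so the relative model is minimal; quadraticity of $d$ on the fibre generators $a,b$ then follows by degree reasons alone. Coformality is thus obtained directly, and Koszulness of $H^\ast(M;\mathbb{Q})$ comes out as a corollary (via Berglund) rather than being an input. Your closing ``more hands-on'' alternative is essentially this construction, but as you describe it you would still have to check $H^\ast(\Lambda V,d)\cong H^\ast(M;\mathbb{Q})$ by hand, which the paper's use of the loop space decomposition renders unnecessary. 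Your remark on the $d=1$ case (the cubic relation in $H^\ast(\mathbb{C}P^2;\mathbb{Q})$ forcing a non-quadratic differential) is correct and matches the paper's treatment of that case.
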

\begin{proof}
Consider the $S^2$-bundle 
\begin{equation}\label{s2fibqeq}
S^2\stackrel{i}{\rightarrow} M\stackrel{p}{\rightarrow} N
\end{equation}
in Diagram (\ref{keydiag}). By \cite[Proposition 4.4]{NM} $N$ is coformal since $d\geq 2$, and hence has a minimal Sullivan model of the form $C^\ast (L_N, 0)=(\Lambda (sL_N)^{\#}, d_1)$ as the associated commutative cochain algebra of $(L_N,0)$ \cite[Example 7 in Chapter 24 (f)]{FHT}.
Let $\widehat{p}: C^\ast (L_N, 0)\stackrel{}{\rightarrow} (C^\ast (L_N)\otimes \Lambda(a, b), d)$ be a relative minimal Sullivan model of $p$, whose quotient $(\Lambda(a, b),\bar{d})$ is a minimal Sullivan model of $S^2$ with $db=a^2$ and ${\rm deg}(a)=2$.
It follows that there is the short exact sequence of the linear part of the model of (\ref{s2fibqeq})
\begin{equation}\label{lineareq}
0\rightarrow ((sL_N)^{\#}, 0)\rightarrow ((sL_N)^{\#}\oplus \mathbb{Q}(a,b), d_0) \stackrel{}{\rightarrow} (\mathbb{Q}(a,b),0)\rightarrow 0,
\end{equation}
such that $H^\ast((sL_N)^{\#}\oplus \mathbb{Q}(a,b), d_0)$ is dual to $\pi_\ast(M)\otimes \mathbb{Q}$. However, since the homotopy groups of (\ref{s2fibqeq}) splits by Theorem \ref{decomthm} and its proof, we see from (\ref{lineareq}) that the linear part $d_0=0$ for $(sL_N)^{\#}\oplus \mathbb{Q}(a,b)$ and hence $(C^\ast (L_N)\otimes \Lambda(a, b), d)$ is a minimal model of $M$. 

To show $M$ is coformal, it suffices to show that the differential $d$ is quadratic on $\mathbb{Q}(a,b)$ in $(C^\ast (L_N)\otimes \Lambda(a, b), d)$. Since $N$ is simply connected, $(sL_N)^{\#}$ concentrates in degrees larger than or equal to $2$. It follows that by the minimality of $(C^\ast (L_N)\otimes \Lambda(a, b), d)$ and degree reasons
\[
da=0, \ \ db=a^2+ay+\sum\limits_i z_i w_i,
\]
for some degree $2$ elements $y$, $z_i$ and $w_i\in (sL_N)^{\#}$. Hence $d=d_1$ in $(C^\ast (L_N)\otimes \Lambda(a, b), d)$. This shows that $M$ is coformal and the proposition is proved.
\end{proof}

We can now prove Theorem \ref{coformalthm}.
\begin{proof}[Proof of Theorem \ref{coformalthm}]
First it is well known that $\mathbb{C}P^i$ is not coformal for $i\geq 2$ by \cite[Example 4.7]{NM}. If $d=1$, then $M$ is determined by a fibre bundle $S^2\stackrel{}{\rightarrow} M\stackrel{}{\rightarrow} \mathbb{C}P^2$. It has a model of the form
\[
(\Lambda(c, x), dx=c^3)\stackrel{}{\longrightarrow} (\Lambda(c, x, a, b), \tilde{d})\stackrel{}{\longrightarrow}(\Lambda(a, b), db=a^2),
\]
where ${\rm deg}(c)={\rm deg}(a)=2$. By degree reason $\tilde{d}(a)=0$, and $\tilde{d}(b)=a^2+kc^2$ for some $k\in \mathbb{Q}$, which implies that $(\Lambda(c, x, a, b), \tilde{d})$ is minimal. However, $\tilde{d}$ is not quadratic as $\tilde{d}(x)=c^3$. Hence $M$ is not coformal. 

When $d\geq 2$, by Proposition \ref{Mcoformalprop} $M$ is coformal. Moreover, Neisendorfer and Miller \cite[Proposition 4.6]{NM} showed that every simply connected $6$-manifold is formal. Hence by \cite[Theorem 1.2]{Ber} $M$ is Koszul. By \cite[Theorem 1.3]{Ber} there is an isomorphism of graded Lie algebras
\[
\pi_\ast(\Omega M)\otimes\mathbb{Q}\cong H^\ast(M;\mathbb{Q})^{!\mathscr{L}ie},
\]
where $(-)^{!\mathscr{L}ie}$ is the Koszul dual Lie functor.
\end{proof}

\bibliographystyle{amsalpha}

\end{document}